\newtheorem{theorem}{Theorem}
\newtheorem{conjecture}{Conjecture}
\newtheorem{proposition}{Proposition}
\newtheorem{remark}{Remark}
\newtheorem{corollary}{Corollary}
\newtheorem{lemma}{Lemma}
\renewcommand{\AA}{{\mathbb{A}}}
\newcommand{\NN}{{\mathbb{N}}}
\newcommand{\ZZ}{{\mathbb{Z}}}
\newcommand{\QQ}{{\mathbb{Q}}}
\newcommand{\EE}{{\bar{\QQ}_\ell}}
\newcommand{\FF}{{\mathbb{F}}}
\newcommand{\ff}{{\Bbbk}}
\newcommand{\KKq}{{\bar{\Kq}}}
\newcommand{\tKq}{{\Kq^{\operatorname{tr}}}}
\newcommand{\RRq}{{\mathfrak{O}_{\KKq}}}
\newcommand{\kkq}{{\bar{\FF}_q}}
\newcommand{\K}{{\mathbb{K}}}
\newcommand{\KK}{{\bar{\mathbb{K}}}}
\newcommand{\Kq}{{\mathbb{K}}}
\newcommand{\Rq}{{\mathfrak{O}_{\Kq}}}
\newcommand{\kq}{{\FF_q}}
\newcommand{\sch}[1]{{#1}}
\newcommand{\Ksch}[1]{{{#1}}}
\newcommand{\KKsch}[1]{{{#1}_{\KKq}}}
\newcommand{\kksch}[1]{{{#1}_{\kkq}}}
\newcommand{\ffsch}[1]{{{#1}_{\ff}}}
\newcommand{\RSch}[2]{{\underline{#1}_{#2}}}
\newcommand{\RRSch}[2]{{(\underline{#1}_{#2})_\RRq}}
\newcommand{\KSch}[2]{{(\underline{#1}_{#2})_{\Kq}}}
\newcommand{\KKSch}[2]{{(\underline{#1}_{#2})_{\KKq}}}
\newcommand{\kSch}[2]{{(\underline{#1}_{#2})_{\kq}}}
\newcommand{\kkSch}[2]{{ (\underline{#1}_{#2})_{\kkq}}}
\newcommand{\rkSch}[2]{{(\underline{#1}_{#2})_{\kq}^{\operatorname{red}} }}
\newcommand{\rkkSch}[2]{{(\underline{#1}_{#2})_{\kkq}^{\operatorname{red}} }}
\newcommand{\Lq}{{\Kq'}}
\newcommand{\RLq}{{\mathfrak{O}_{\Kq'}}}
\renewcommand{\lq}{{\FF_{q'}}}
\newcommand{\rk}[3]{{\nu_\RSch{#1}{#3} \kSch{#2}{#3}}}
\newcommand{\rkk}[3]{{\nu_\RSch{#1}{#3} \kkSch{#2}{#3}}}
\newcommand{\LBsch}[1]{{\Ksch{#1}}}
\newcommand{\RLBSch}[2]{{ \underline{#1}_{#2} }}
\newcommand{\RRLBSch}[2]{{ {\underline{\bar #1}}_{#2} }}
\newcommand{\LBSch}[2]{{ (\underline{#1}_{#2})_{\Lq}}}
\newcommand{\LLBSch}[2]{{ (\underline{#1}_{#2})_{\KKq}}}
\newcommand{\lBSch}[2]{{ (\underline{#1}_{#2})_{\lq}}}
\newcommand{\llBSch}[2]{{ (\underline{#1}_{#2})_{\kkq} }}
\newcommand{\Lsch}[1]{{\Ksch{#1}_{\Lq}}}
\newcommand{\rlSch}[2]{{ (\underline{#1}_{#2})_{\lq}^{\operatorname{red}} }}
\newcommand{\fais}[1]{{ \mathcal{#1} }}
\newcommand{\rkPSch}[2]{{{#1}_{#2}}}
\newcommand{\rkkPSch}[2]{{ ({#1}_{#2})_\kkq }}
\def\cf{{{\it cf.\ }}}
\newcommand{\tq}{{\ \vert\ }}
\newcommand{\iso}{{\ \cong\ }}
\newcommand{\ceq}{{\, := \, }}
\newcommand{\res}{{\operatorname{res}}}
\newcommand{\ind}{{\operatorname{ind}}}
\newcommand{\Spec}[1]{{\operatorname{Spec}\left(#1\right)}}
\newcommand{\catM}{{\mathcal{M}}}
\newcommand{\proj}{{\operatorname{pr}}}
\newcommand{\id}{{\, \operatorname{id}}}
\newcommand{\cres}[2]{{\operatorname{cres}_\RRSch{#1}{#2} }}
\newcommand{\Sp}{{\operatorname{Sp}}}
\newcommand{\NC}[1]{{ {{\rm R}\Psi}_{#1}}}
\newcommand{\Gm}{{\mathbb{G}_{m}}}
\newcommand{\Gadd}{{\mathbb{G}_a}}
\newcommand{\Ext}{{\operatorname{Ext}}}
\newcommand{\IC}{{\operatorname{I\hskip-1pt C}^\bullet}}
\newcommand{\git}{/\!/}
\title[Toward a Mackey Formula for Compact Restriction]{Toward a Mackey Formula for Compact Restriction of Character Sheaves}
\date\today
\author{Pramod N. Achar}
\address{Department of Mathematics, Loiusiana State University}
\email{pramod@math.lsu.edu}
\author{Clifton L.R. Cunningham}
\address{Department of Mathematics, University of Calgary}
\email{cunning@math.ucalgary.ca}
\subjclass[2000]{}
\keywords{}
\begin{document}

\begin{abstract}
We generalize \cite{cunningham-salmasian:sheaves}*{Theorem~3} to a Mackey-type formula for the compact restriction of a semisimple perverse sheaf produced by parabolic induction from a character sheaf, under certain conditions on the parahoric group scheme used to define compact restriction. This provides new tools for matching character sheaves with admissible representations. 
\end{abstract}

\maketitle


\section*{Introduction}

In this paper we prove a Mackey-type formula for the compact restriction functors introduced in \cite{cunningham-salmasian:sheaves}. The main result, Theorem~\ref{theorem: 3bis}, applies to any connected reductive linear algebraic group $\Ksch{G}$ over any non-Archimendean local field $\Kq$ that satisfies the following three hypotheses:
\begin{enumerate}
\item[(H.0)]
$\Ksch{G}$ is the generic fibre of a smooth, connected reductive group scheme over the ring of integers $\Rq$ of $\Kq$;
\item[(H.1)]
the characteristic of $\Kq$ is not $2$ (in particular, this condition is met if the characteristic of $\Kq$ is $0$);
\item[(H.2)]
for every parabolic subgroup $\KKsch{P}'\subseteq\Ksch{G}\times_\Spec{\Kq} \Spec{\KKq}$ there is a finite unramified extension $\Lq$ of $\Kq$ and a subgroup $\Ksch{P} \subseteq \Ksch{G}\times_\Spec{\Kq} \Spec{\Lq}$ such that $\Ksch{P}\times_\Spec{\Lq}\Spec{\KKq}$ is conjugate to $\KKsch{P}'$ by an element of $\Ksch{G}(\tKq)$. 
\end{enumerate}
Here, $\KKq$ is a separable algebraic closure of $\Kq$ and $\tKq$ is the maximal tamely ramified extension of $\Kq$ contained in $\KKq$.

As far as applications to representation theory are concerned, these are, arguably, mild hypotheses.
Hypothesis H.0 is equivalent to demanding that the Bruhat-Tits building of $\Ksch{G}(\Kq)$ admits a hyperspecial vertex (see \cite{tits:corvallis}). Every quasi-split reductive linear algebraic group over $\Kq$ that splits over an unramified extension of $\Kq$ satisfies this hypothesis (again, see \cite{tits:corvallis}).  Hypothesis H.1 is met whenever $\Kq$ is a finite extension of $\QQ_p$ and $p>2$. Hypothesis H.2 is satisfied if $\Ksch{G}$ is quasi-split over a maximal unramified extension of $\Kq$ and so, in particular, if $\Ksch{G}$ is quasi-split over $\Kq$. If $\Ksch{G}$ is specified, Hypothesis H.2 has the effect of imposing a lower bound on the residual characteristic of $\Kq$ that depends on $\Ksch{G}$. One large and interesting class of algebraic groups to which Theorem~\ref{theorem: 3bis} applies (because they satisfy Hypotheses H.0, H.1 and H.2) consists of unramified linear algebraic groups $\Ksch{G}$ over non-Archimedean local fields $\Kq$ of characteristic $0$ or greater than $3$. 

In order to state Theorem~\ref{theorem: 3bis}, we must recall a few facts concerning parahoric group schemes. In \cite{bruhat-tits:reductive-groups-1} and \cite{bruhat-tits:reductive-groups-2}, Fran\c{c}ois Bruhat and Jacques Tits showed that parahoric subgroups of $\Ksch{G}(\Kq)$ (where $\Ksch{G}$ is  a connected reductive linear algebraic group over $\Kq$) may be understood as subgroups arising from a class of smooth group schemes over $\Spec{\Rq}$ with generic fibre $\Ksch{G}$; these smooth integral models for $\Ksch{G}$ are habitually called \emph{parahoric group schemes}.
They further showed that parahoric group schemes are parametrized by facets in the Bruhat-Tits building for $\Ksch{G}(\Kq)$. Let $I(\Ksch{G},\Kq)$ denote the Bruhat-Tits building for $\Ksch{G}(\Kq)$ and for each $x\in I(\Ksch{G},\Kq)$, let $\RSch{G}{x}$ denote the parahoric group scheme attached to (the minimal facet  containing) $x$. Then $\RSch{G}{x}$ is a smooth group scheme over $\Spec{\Rq}$ and its generic fibre, $\KSch{G}{x}$, is $\Ksch{G}$. 
 The group $\RSch{G}{x}(\Rq)$ of $\Rq$-rational points on $\RSch{G}{x}$ is a parahoric subgroup of $\Ksch{G}(\Kq)$ and every parahoric subgroup of $\Ksch{G}(\Kq)$ arises in this manner. Although the special fibre of $\RSch{G}{x}$, denoted by $\kSch{G}{x}$ in this paper, is a connected linear algebraic group over the residue field $\kq$ of $\Kq$, it is generally not a reductive group scheme. \emph{Parahoric group schemes are generally not reductive group schemes}. In fact, $\RSch{G}{x}$ is reductive precisely when the parahoric subgroup $\RSch{G}{x}(\Rq)$ is hyperspecial; in this case, $x$ is a hyperspecial vertex in $I(\Ksch{G},\Kq)$. 
Even if $x$ is not hyperspecial, it is useful to consider the map (of group schemes over $\kq$) $\nu_\RSch{G}{x} : \kSch{G}{x} \to \rkSch{G}{x}$ to the maximal reductive quotient of $\kSch{G}{x}$. 

One more notion is required in order to state Theorem~\ref{theorem: 3bis}: the compact restriction functors introduced in \cite{cunningham-salmasian:sheaves}. These are designed with applications to characters of admissible representations in mind; here we recall their definition only. 
Each parahoric group scheme $\RSch{G}{x}$ determines a compact restriction functor \[\cres{G}{x} : D^b_c(\KKsch{G},\EE) \to D^b_c(\rkkSch{G}{x},\EE),\] introduced in \cite{cunningham-salmasian:sheaves}*{Definition~1} and defined by  
\[
\cres{G}{x} \ceq (\nu_{\RRSch{G}{x}})_!\ (\dim \nu_{\RSch{G}{x}}/2)\ \NC{\RRSch{G}{x}}.
\] 
Here  $\NC{\RRSch{G}{x}}: D^b_c(\KKsch{G},\EE) \to D^b_c(\kkSch{G}{x},\EE)$
is the nearby cycles functor (defined as in \cite{BBD}*{4.4.1}, for example) for the group scheme $\RRSch{G}{x} \ceq \RSch{G}{x} \times_\Spec{\Rq} \Spec{\RRq}$, where $\RRq$ is the ring of integers of a fixed separable algebraic closure $\KKq$ of $\Kq$, and $(\dim \nu_{\RSch{G}{x}}/2)$ indicates Tate twist by $\dim \nu_{\RSch{G}{x}}/2$. Notice that the compact restriction functor uses push-forward with compact supports of the morphism $\nu_\RRSch{G}{x} : \kkSch{G}{x} \to \rkkSch{G}{x}$ obtained from $\nu_\RSch{G}{x}$ by extending scalars from $\kq$ (the residue field of $\Kq$) to $\kkq$ (the residue field of $\KKq$). Hypothesis H.0 ensures that $\dim \nu_{\RSch{G}{x}}$ is even \cite{cunningham-salmasian:sheaves}*{Lemma~2}.
   
Now we may state Theorem~\ref{theorem: 3bis}, supposing Hypotheses H.1 and H.2 are met for $\Ksch{G}$ over $\Kq$: if $\Lq/\Kq$ is a finite unramified extension and if $\LBsch{P}$ is a parabolic subgroup of $\Ksch{G}\times_\Spec{\Kq} \Spec{\Lq}$ with reductive quotient $\Ksch{L}\times_\Spec{\Kq} \Spec{\Lq}$ (so $\Ksch{L}$ is a `twisted Levi subgroup' of $\Ksch{G}$), then for every $x \in I(\Ksch{G},\Kq)$ for which that star of $x\in I(G,\K)$ contains a hyperspecial vertex (in which case Hypothesis H.0 is also met) there is a finite set $\mathcal{S}\subset G(\Lq)$ such that 
\[
\cres{G}{x}\ind^\KKsch{G}_\KKsch{P}\ \fais{G}
\iso \mathop{\oplus}\limits_{g\in \mathcal{S}} \ind^\rkkSch{G}{x}_{\rkk{G}{\,^g\hskip-1pt P}{x'}}\ \,^g\hskip-1pt\left(\cres{L}{x'g} \fais{G}\right),
\]
for every character sheaf $\fais{G}$ of $\KKsch{L}\ceq \Ksch{L}\times_\Spec{\Kq} \Spec{\KKq}$.
The finite set $\mathcal{S}$, the parabolic subgroups $\rkk{G}{\,^g\hskip-1pt P}{x'}$ of $\rkkSch{G}{x}$, the integral models $\RSch{L}{x'g}$ appearing in $\cres{L}{x'g}$, and the meaning of $\,^g\hskip-1pt(\cres{L}{x'g} \fais{G})$, are all given in the proof of Theorem~\ref{theorem: 3bis}.

In \cite{cunningham-salmasian:sheaves} we showed that the compact restriction functors $\cres{G}{x}$ satisfy properties that go some way to showing that they are cohomological analogues of compact restriction functors for admissible representations. Theorem~\ref{theorem: 3bis} extends this analogy by providing a Mackey-type formula for $\cres{G}{x} \ind_\KKsch{P}^\KKsch{G} \fais{G}$ in certain cases. We believe that the condition placed on $x$ (that its star contains a hyperspecial vertex) is unnecessary; that is the content of Conjecture~\ref{conjecture: mackey} and the subject of current work.

Before concluding this introduction we acknowledge the elephant in the room: we do not know if the compact restriction $\cres{G}{x}\fais{F}$ of a character sheaf $\fais{F}$ of $\KKsch{G}$ is, in general, a semisimple perverse sheaf.  However, if $x_0$ is hyperspecial, then $\cres{G}{x_0}\fais{F} = \NC{\RRSch{G}{x_0}}{\fais{F}}$, which is perverse if $\fais{F}$ is perverse. In Proposition~\ref{proposition: BIRS} we show that more is true: if $\fais{F}$ is a character sheaf of $\KKsch{G}$ and $x_0$ is hyperspecial, then $\cres{G}{x_0}\fais{F}$ is a direct sum of character sheaves of $\rkkSch{G}{x_0} = \kkSch{G}{x_0}$, and therefore a semisimple perverse sheaf of geometric origin. This is a crucial ingredient in the proof of Theorem~\ref{theorem: 3bis}. 

We offer our thanks to Hadi Salmasian, who supplied several ideas for this paper.

\section{Cuspidal perverse sheaves}\label{section: pramod}

Let $\ff$ be any algebraically closed field and let $\ffsch{G}$ be any connected reductive linear algebraic group over $\ff$. A perverse sheaf $\fais{F}$ on $\ffsch{G}$ is a \emph{strongly cuspidal perverse sheaf}  \cite{lusztig:character-sheaves-II}*{7.1.5} if:
\begin{enumerate}
\item[(SC.1)] there is some $n\in \NN$ invertible in $\ff$ such that $\fais{F}$ is equivariant with respect to the $\ffsch{G}\times \mathcal{Z}_\ffsch{G}^\circ$ action on $\ffsch{G}$ defined by $(g,z) : h \mapsto z^n g h g^{-1}$;
\item[(SC.2)] $\res^\ffsch{G}_\ffsch{P}\fais{F} =0$ for every proper parabolic subgroup $\ffsch{P}\subset \ffsch{G}$.
\end{enumerate}
 A perverse sheaf $\fais{F}$ is a \emph{cuspidal perverse sheaf} if it satisfies an {\it a priori} weaker condition, articulated in \cite{lusztig:character-sheaves-I}*{7.1.1}; in particular, every strongly cuspidal perverse sheaf is a cuspidal perverse sheaf. In \cite{lusztig:character-sheaves-II}*{7.1.6}, Lusztig showed that every character sheaf is cuspidal if and only if it is strongly cuspidal. He also showed \cite{lusztig:character-sheaves-V}*{Theorem~23.1(b)} that if $\ffsch{G}$ is classical or exceptional in good characteristic, then every simple cuspidal perverse sheaf on $\ffsch{G}$ is a character sheaf. Accordingly, in these cases, it has been known for some time that every simple cuspidal perverse sheaf is a cuspidal character sheaf. More recently, Ostrik has made this result unconditional: every simple cuspidal perverse sheaf of $\ffsch{G}$ is a cuspidal character sheaf of $\ffsch{G}$, for every connected reductive linear algebraic group $\ffsch{G}$ over any algebraically closed field $\ff$ \cite{ostrik:cuspidal}*{Theorem~2.12}.

For every cuspidal character sheaf $\fais{F}$ there is a cuspidal pair $(\Sigma,\fais{E})$ \cite{lusztig:intersection-cohomology}*{Definition~2.4} such that $\fais{F} = j_{!*}\ \fais{E}[\dim \Sigma]$ \cite{lusztig:character-sheaves-I}*{Proposition~3.12} where $j: \Sigma \to \ffsch{G}$ is the inclusion of the locally closed subvariety $\Sigma$. The cuspidal character sheaf $j_{!*}\ \fais{E}[\dim \Sigma]$ is \emph{clean} 
\cite{lusztig:character-sheaves-II}*{Definition~7.7} if 
\[
j_{!*}\ \fais{E}[\dim \Sigma] \iso j_{!}\ \fais{E}[\dim \Sigma] \iso  j_{*}\ \fais{E}[\dim \Sigma].
\]

A connected, reductive linear algebraic group is \emph{clean} 
\cite{lusztig:character-sheaves-III}*{13.9.2} if every cuspidal character sheaf of every Levi subgroup of $\ffsch{G}$ is clean. Lusztig has conjectured that every connected reductive linear algebraic group $\ffsch{G}$ over an algebraically closed field $\ff$ is clean, and has shown \cite{lusztig:character-sheaves-V}*{Theorem~23.1~(a)} that if the characteristic of the field $\ff$ is not $2$, $3$ or $5$ then $\ffsch{G}$ is clean; in fact, \cite{lusztig:character-sheaves-V}*{Theorem~23.1~(a)} shows much more. This result was strengthened by Shoji in \cite{shoji:character-sheaves-I} and \cite{shoji:character-sheaves-II}, and again by Ostrik \cite{ostrik:cuspidal}*{Theorem~1}, in light of which we now know that if the characteristic of $\ff$ is not $2$ or if $\ffsch{G}$ has no factors of type $F_4$ or $E_8$, then $\ffsch{G}$ is clean. In particular, if the characteristic of $\ff$ is not $2$, then $\ffsch{G}$ is clean.

\begin{proposition}\label{proposition: Pramod}
If $\ffsch{G}$ is clean 
 then every strongly cuspidal perverse sheaf on $\ffsch{G}$ is a direct sum of cuspidal character sheaves; in particular, under these conditions every strongly cuspidal perverse sheaf on $\ffsch{G}$ is semisimple of geometric origin.
\end{proposition}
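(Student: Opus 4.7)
The plan is to prove the proposition in two steps: (i) show that every simple composition factor of $\fais{F}$ in the abelian category of perverse sheaves on $\ffsch{G}$ is a cuspidal character sheaf; and (ii) show that, in the category of $\ffsch{G}\times\mathcal{Z}_\ffsch{G}^\circ$-equivariant perverse sheaves (in which $\fais{F}$ lives by (SC.1)), any two such composition factors have vanishing $\Ext^1$. Step (ii), combined with induction on the length of $\fais{F}$, yields the desired direct sum decomposition.

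For step (i), I would check that both (SC.1) and (SC.2) are inherited by simple perverse subquotients. Equivariance for the particular fixed $n$ is manifestly preserved under kernels, images, and cokernels. For (SC.2), I would invoke the fact that, with its standard normalization, Lusztig's parabolic restriction $\res^\ffsch{G}_\ffsch{P}$ is $t$-exact for the perverse $t$-structure, so that $\res^\ffsch{G}_\ffsch{P}\fais{F}=0$ forces the same vanishing on every simple perverse subquotient. Each such subquotient is therefore simple and strongly cuspidal, and by \cite{ostrik:cuspidal}*{Theorem~2.12} is a cuspidal character sheaf.

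For step (ii), write two such simple composition factors as $\fais{F}_\alpha = (j_\alpha)_{!*}\fais{E}_\alpha[\dim\Sigma_\alpha]$, $\alpha=1,2$, arising from cuspidal pairs $(\Sigma_\alpha,\fais{E}_\alpha)$ on $\ffsch{G}$. Since $\ffsch{G}$ is clean, each $\fais{F}_\alpha$ equals both $(j_\alpha)_!\fais{E}_\alpha[\dim\Sigma_\alpha]$ and $(j_\alpha)_*\fais{E}_\alpha[\dim\Sigma_\alpha]$. Writing $\fais{F}_1$ in the $!$-form and $\fais{F}_2$ in the $*$-form and applying the adjunction $j_2^*\dashv (j_2)_*$ gives
\[
\Ext^1(\fais{F}_1,\fais{F}_2)\iso\Hom\bigl(j_2^*(j_1)_!\fais{E}_1[\dim\Sigma_1],\ \fais{E}_2[\dim\Sigma_2+1]\bigr).
\]
When $\Sigma_1\neq\Sigma_2$, the two strata are disjoint orbits of the $\ffsch{G}\times\mathcal{Z}_\ffsch{G}^\circ$-action of (SC.1), so $(j_1)_!\fais{E}_1$ restricts to zero on $\Sigma_2$ and the right-hand side vanishes. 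When $\Sigma_1=\Sigma_2=\Sigma$, the right-hand side becomes an $\Ext^1$ between equivariant local systems on the single orbit $\Sigma$; such extensions are classified by representations of the finite component group of a stabilizer, and so this $\Ext^1$ also vanishes.

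The main obstacle I anticipate is the same-stratum case of step (ii): one must work throughout in the $\ffsch{G}\times\mathcal{Z}_\ffsch{G}^\circ$-equivariant derived category, where (SC.1) makes splittings detectable, rather than in the non-equivariant category, in which extensions between equivariant local systems on a single orbit need not split. Cleanness of $\ffsch{G}$ enters precisely in the replacement of $(j_\alpha)_{!*}$ by $(j_\alpha)_!$ and $(j_\alpha)_*$, which is what allows the adjunction computation to go through; the final conclusion about geometric origin is then immediate, as each summand is a cuspidal character sheaf.
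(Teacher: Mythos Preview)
Your proof is essentially the paper's: induction on length, using exactness of $\res^{\ffsch{G}}_{\ffsch{P}}$ to show that simple subquotients inherit (SC.1) and (SC.2) and are therefore cuspidal character sheaves by Ostrik, followed by an $\Ext^1$-vanishing argument based on cleanness and the same two-case stratum analysis. The only difference is in the same-stratum case: the paper works non-equivariantly and concludes by asserting that the category of $\EE$-representations of the profinite group $\pi_1(\Sigma,\bar s)$ is semisimple, whereas you (appropriately) flag this step and propose instead to work in the $\ffsch{G}\times\mathcal{Z}^\circ_{\ffsch{G}}$-equivariant category, where the abelian category of local systems on the orbit $\Sigma$ is that of representations of a finite component group.
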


\begin{proof}
The category of perverse sheaves on $\ffsch{G}$ is Artinian and Noetherian: every perverse sheaf has finite length \cite{BBD}*{Th\'eor\`eme~4.3.1~(i)}. We prove Proposition~\ref{proposition: Pramod} by induction on the length (of the composition series) of cuspidal perverse sheaves on $\ffsch{G}$.  First, suppose $\fais{F}$ is a cuspidal perverse sheaf and the length of $\fais{F}$ is $1$. Then $\fais{F}$ is a simple perverse sheaf and a strongly cuspidal perverse sheaf, and therefore a cuspidal character sheaf, by hypothesis.

Next, suppose $\fais{F}$ is a strongly cuspidal perverse sheaf with length at least $2$. Let $\fais{G}$ be a simple sub-object of $\fais{F}$. Arguing as in \cite{lusztig:character-sheaves-I}*{1.9.1}, it follows that $\fais{G}$ satisfies condition SC.1 above (with $n$ determined by $\fais{F}$); in particular, $\fais{G}$ is an equivariant perverse sheaf. 

We will demonstrate that $\fais{G}$ satisfies condition SC.2. In the abelian category of perverse sheaves, form the short exact sequence below.
\begin{equation}\label{eqn: Pramod 1}
\xymatrix{
0 \ar[r] & \fais{G} \ar[r] & \fais{F} \ar[r] & \fais{F}/\fais{G} \ar[r] & 0 
}
\end{equation}
Then $\fais{F}/\fais{G}$ is equivariant (again, use \cite{lusztig:character-sheaves-I}*{1.9.1}).
Let $\ffsch{P}\subset \Ksch{G}$ be a proper parabolic subgroup; let $\ffsch{L}$ be its reductive quotient. Since $\res^\ffsch{G}_\ffsch{P} : \catM_\ffsch{G} \ffsch{G} \to \catM_\ffsch{L} \ffsch{L}$ is an exact functor (on and to equivariant perverse sheaves) it takes \eqref{eqn: Pramod 1} to the short exact sequence below.
\begin{equation}\label{eqn: Pramod 2}
\xymatrix{
0 \ar[r] & \res^\ffsch{G}_\ffsch{P}\fais{G} \ar[r] & \res^\ffsch{G}_\ffsch{P}\fais{F} \ar[r] & \res^\ffsch{G}_\ffsch{P}\left(\fais{F}/\fais{G}\right) \ar[r] & 0 
}
\end{equation}
Since $\fais{F}$ is a strongly cuspidal perverse sheaf (by hypothesis) and the sequence is exact, $\res^\ffsch{G}_\ffsch{P}\fais{G}=0$. Since $\ffsch{P}$ was an arbitrary proper parabolic subgroup of $\ffsch{G}$, it follows that $\fais{G}$ is a strongly cuspidal perverse sheaf. Since $\fais{G}$ is also simple, by hypothesis, it follows that $\fais{G}$ is a cuspidal character sheaf.

The paragraph above also shows that $\res^\ffsch{G}_\ffsch{P}\left(\fais{F}/\fais{G}\right) =0$ for every proper parabolic subgroup of $\ffsch{G}$. Thus, $\fais{F}/\fais{G}$ is a strongly cuspidal perverse sheaf. Since the length of $\fais{F}/\fais{G}$ is strictly less that that of $\fais{F}$, it follows (from the induction hypothesis) that $\fais{F}/\fais{G}$ is a direct sum of cuspidal character sheaves. Accordingly, we write $\fais{F}/\fais{G} = \oplus_{i\in I} \fais{G}_i$, where each $\fais{G}_i$ is a cuspidal character sheaf. 
Since $\fais{F}$ is an extension of $\fais{F}/\fais{G}$ by $\fais{G}$, it corresponds to an element of $\Ext^1(\fais{F}/\fais{G},\fais{G})$. Now, $\Ext^1(\fais{F}/\fais{G},\fais{G}) = \prod_{i\in I} \Ext^1(\fais{G}_i,\fais{G})$. Recall that $\fais{G}$ and each $\fais{G}_i$ are cuspidal character sheaves. 
It now follows from Lemma~\ref{lemma: Pramod} that $ \Ext^1(\fais{G}_i,\fais{G})=0$, and therefore that $\Ext^1(\fais{F}/\fais{G},\fais{G})=0$. This means that the short exact sequence in \eqref{eqn: Pramod 1} is split. Thus,
\[
\fais{F} = \fais{F}/\fais{G} \oplus \fais{G} = \mathop{\oplus}\limits_{i\in I} \fais{G}_i \oplus \fais{G}.
\]
Therefore, $\fais{F}$ is a direct sum of cuspidal character sheaves.
\end{proof}

\begin{lemma}\label{lemma: Pramod}
If $\ffsch{G}$ is clean then $\Ext^1(\fais{G}_i,\fais{G})=0$ for all cuspidal character sheaves $\fais{G}_i$, $\fais{G}$ of $\ffsch{G}$.
\end{lemma}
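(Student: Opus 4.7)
The plan is to use cleanness to write each cuspidal character sheaf as both a $!$- and a $*$-extension from its cuspidal stratum, and then reduce the Ext computation by adjunction to an Ext between local systems on a single stratum, where it becomes a representation-theoretic statement about a finite group.

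By cleanness there are cuspidal pairs $(\Sigma_i,\fais{E}_i)$ and $(\Sigma,\fais{E})$ with
\[
\fais{G}_i\iso (j_i)_!\,\fais{E}_i[\dim\Sigma_i]\iso (j_i)_*\,\fais{E}_i[\dim\Sigma_i], \quad \fais{G}\iso j_!\,\fais{E}[\dim\Sigma]\iso j_*\,\fais{E}[\dim\Sigma],
\]
where $j_i\colon\Sigma_i\hookrightarrow\ffsch{G}$ and $j\colon\Sigma\hookrightarrow\ffsch{G}$ are the locally closed inclusions. I take $\fais{G}_i$ as the $!$-extension and $\fais{G}$ as the $*$-extension and apply the adjunction $(j^*,j_*)$. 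Since by condition~SC.1 cuspidal character sheaves are $\ffsch{G}\times\mathcal{Z}_\ffsch{G}^\circ$-equivariant, and since the $\Ext^1$ classifying extensions in the ambient category of equivariant perverse sheaves is computed in the corresponding equivariant derived category, the plan is to compute
\[
\Ext^1(\fais{G}_i,\fais{G})\iso \Ext^1_{\Sigma}\!\bigl(j^*(j_i)_!\,\fais{E}_i[\dim\Sigma_i],\ \fais{E}[\dim\Sigma]\bigr)
\]
in that setting.

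If $\Sigma_i\ne\Sigma$, distinct cuspidal strata are disjoint (a structural feature of Lusztig's cuspidal stratification of $\ffsch{G}$), and cleanness forces the stalks of $(j_i)_!\fais{E}_i[\dim\Sigma_i]$ to vanish off $\Sigma_i$; hence $j^*(j_i)_!\fais{E}_i[\dim\Sigma_i]=0$ and the $\Ext^1$ vanishes. If instead $\Sigma_i=\Sigma\qec\Sigma_0$, the standard identity $j_0^*(j_0)_!\iso\id$ reduces the computation to $\Ext^1_{\Sigma_0}(\fais{E}_i,\fais{E})$ between equivariant local systems on $\Sigma_0$. The cuspidal stratum $\Sigma_0$ is a homogeneous space for $\ffsch{G}\times\mathcal{Z}_\ffsch{G}^\circ$ modulo the stabilizer of a base point, so equivariant irreducible local systems on $\Sigma_0$ correspond to irreducible $\EE$-representations of the finite component group of that stabilizer; because we work in characteristic zero this category is semisimple, and the $\Ext^1$ vanishes.

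I expect the main obstacle to lie in the same-stratum case: verifying carefully that equivariant $\Ext^1$ of equivariant local systems on $\Sigma_0$ genuinely reduces to representations of the finite component group of a stabilizer, and does not pick up spurious $H^1$ contributions from the torus factor $\mathcal{Z}_\ffsch{G}^\circ$ (the analogous non-equivariant computation would not vanish, so the equivariance is essential). The different-strata case should be essentially formal once Lusztig's structural description of cuspidal strata is invoked.
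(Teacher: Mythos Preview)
Your argument tracks the paper's proof closely: both invoke cleanness to replace $\fais{G}_i$ and $\fais{G}$ by $!$- and $*$-extensions from their cuspidal strata, reduce via the adjunction $(j^*,j_*)$, and split into the cases $\Sigma_i\ne\Sigma$ (where disjointness of distinct cuspidal strata gives vanishing for support reasons) and $\Sigma_i=\Sigma$. The one real difference is the endgame on a single stratum. The paper stays non-equivariant and appeals to semisimplicity of $\EE$-representations of the profinite group $\pi_1(\Sigma,\bar s)$; you instead work in the $\ffsch{G}\times\mathcal{Z}_\ffsch{G}^\circ$-equivariant derived category, where $\Sigma$ is a single orbit and the question becomes $\Ext^1$ between irreducibles of the finite component group of a stabilizer. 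Your worry about ``spurious $H^1$ from the torus'' is harmless: for any linear algebraic stabilizer $H$, the ring $H^*_H(\mathrm{pt},\EE)$ lives in even degrees (polynomial generators from a maximal torus of $H^\circ$, while $\pi_0(H)$ contributes nothing in positive degree over $\EE$), so equivariant $\Ext^1$ between such irreducibles vanishes. Your equivariant route is arguably the more robust one, precisely for the reason you flag---the non-equivariant $\Ext^1$ on $\Sigma$ is an \'etale $H^1$ that need not vanish, since $\pi_1(\Sigma)$ has infinite pro-$\ell$ quotients coming from $\mathcal{Z}_\ffsch{G}^\circ$---and it suffices for the application, because the short exact sequence in Proposition~\ref{proposition: Pramod} is an extension of equivariant perverse sheaves.
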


\begin{proof}

Let $\fais{G}$ (resp. $\fais{G}_i$) be a clean cuspidal character sheaf of $\ffsch{G}$. Then there is a unique cuspidal pair $(\Sigma,\fais{E})$  (resp. $(\Sigma_i,\fais{E}_i)$) such that $\fais{G} = \IC(\Sigma,\fais{E})[\dim \Sigma]$ (resp. $\fais{G}_i = \IC(\Sigma_i,\fais{E}_i)[\dim \Sigma_i]$). 
 Since we have assumed $\ffsch{G}$ is clean, $\fais{G}$ (resp. $\fais{G}_i$) is a clean cuspidal character sheaf. Since $\fais{G}$ (resp. $\fais{G}$) is clean, $\fais{G} = j_{!*}\ \fais{E}[\dim \Sigma] = j_{*}\ \fais{E}[\dim \Sigma]  = j _{!}\ \fais{E}[\dim \Sigma]$ (resp. $\fais{G}_i = (j_i)_{!*}\ \fais{E}_i[\dim \Sigma_i] = (j_i)_{*}\ \fais{E}_i[\dim \Sigma_i]  = (j_i)_{!}\ \fais{E}_i[\dim \Sigma_i]$).

Consider two cases. On the one hand, if $\Sigma \ne \Sigma_i$ then $\Sigma \cap \Sigma_i= \emptyset$ (this is a property of cuspidal pairs), and since $\fais{G}_i$ and $\fais{G}$ are clean, it follows that $\Ext^1(\fais{G}_i,\fais{G})=0$ for trivial reasons (they have disjoint support). 
On the other hand, suppose $\Sigma=\Sigma_i$. Then $\Ext^1(\fais{G}_i,\fais{G}) = \Ext^1(j_{!*}\fais{E}_i[\dim \Sigma], j_{!*}\fais{E}[\dim \Sigma])$. Since $\fais{G}_i$ and $\fais{G}$ are clean, 
\[
\Ext^1(j_{!*}\fais{E}_i[\dim \Sigma], j_{!*}\fais{E}[\dim \Sigma]) \iso\Ext^1(j_{!}\fais{E}_i, j_{*}\fais{E}).
\]
By adjunction,  
\[
\Ext^1(j_{!}\fais{E}_i, j_{*}\fais{E})= \Ext^1(j^{*}j_{!}\fais{E}_i, \fais{E}) =  \Ext^1(\fais{E}_i, \fais{E}).
\] 
Now $\fais{E}_i$ and $\fais{E}$ are local systems on $\Sigma$ corresponding (under an equivalence of categories determined by the choice of a geometric point $\bar{s}$ on $\Sigma$) to irreducible $\EE$-representations of the algebraic fundamental group $\pi_1(\Sigma, \bar{s})$. This group is compact (since it is profinite) and $\EE$ is algebraically closed of characteristic $0$, so the category of  $\EE$-representations of $\pi_1(\Sigma,\bar{s})$ is semisimple. Thus, $\Ext^1(\fais{E}_i, \fais{E})=0$. 
\end{proof}

\section{A little geometry}\label{subsection: geometry}

\begin{proposition}\label{proposition: geometry}
Let $\Ksch{G}$ be a connected, reductive linear algebraic group over a non-Archimedean local field $\Kq$. 
For every parabolic subgroup $\Ksch{P}\subseteq \Ksch{G}$ and for every $x\in I(\Ksch{L},\Kq)$ there is a smooth integral model $\RSch{P}{x}$ for $\Ksch{P}$ such that $\RSch{P}{x}(\Rq) = \Ksch{P}(\Kq) \cap \RSch{G}{x}(\Rq)$. Moreover, if $\Ksch{L}$ is the Levi subgroup of $\Ksch{P}$ and if $x$ actually lies in the building $I(\Ksch{L},\Kq)$ as a sub-building of $I(\Ksch{G},\Kq)$, then the quotient $\Ksch{\pi} : \Ksch{P} \to \Ksch{L}$ extends to a morphism of smooth integral models $\RSch{\pi}{x} :\RSch{P}{x} \to \RSch{L}{x}$, where $\RSch{L}{x}$ is the parahoric group scheme for $\Ksch{L}$ determined by $x$ as an element of $I(\Ksch{L},\Kq)$.
\end{proposition}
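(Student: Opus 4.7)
The plan is to construct $\RSch{P}{x}$ as a smooth closed $\Rq$-subgroup scheme of $\RSch{G}{x}$ whose $\Rq$-points agree with $\Ksch{P}(\Kq) \cap \RSch{G}{x}(\Rq)$. The main tool is the Iwahori-type factorization of $\RSch{G}{x}$ with respect to the parabolic $\Ksch{P}$ supplied by Bruhat-Tits theory \cite{bruhat-tits:reductive-groups-2}. First I would fix a Levi decomposition $\Ksch{P} = \Ksch{L}\ltimes\Ksch{U}$ over $\Kq$ and let $\Ksch{U}^-$ denote the opposite unipotent radical. The affine root subgroup description of $\RSch{G}{x}$ produces, for each affine root $\alpha$ taking non-negative value at $x$, a smooth one-parameter closed $\Rq$-subgroup scheme $\RSch{U_\alpha}{x}$ of $\RSch{G}{x}$. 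Grouping these according to whether $\alpha$ lies in the roots of $\Ksch{U}$, $\Ksch{U}^-$, or the roots of $\Ksch{L}$, one obtains smooth closed $\Rq$-subgroup schemes $\RSch{U}{x}$, $\RSch{U^-}{x}$ of $\RSch{G}{x}$ and a smooth closed subgroup scheme $\RSch{L}{x}^{\Ksch{P}} \subseteq \RSch{G}{x}$ with generic fibres $\Ksch{U}$, $\Ksch{U}^-$, and $\Ksch{L}$ respectively. Moreover, the multiplication map
\[
\RSch{U^-}{x} \times_{\Rq} \RSch{L}{x}^{\Ksch{P}} \times_{\Rq} \RSch{U}{x} \longrightarrow \RSch{G}{x}
\]
is an open immersion whose image is dense on both fibres.

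Next, I would define $\RSch{P}{x}$ to be the semidirect product $\RSch{L}{x}^{\Ksch{P}}\ltimes \RSch{U}{x}$, viewed as a closed subgroup scheme of $\RSch{G}{x}$ via the above factorization. Since both factors are smooth of the expected dimensions, so is $\RSch{P}{x}$; its generic fibre is $\Ksch{L}\ltimes \Ksch{U} = \Ksch{P}$; and the identification $\RSch{P}{x}(\Rq) = \Ksch{P}(\Kq)\cap \RSch{G}{x}(\Rq)$ follows from the corresponding Iwahori decomposition at the level of $\Rq$-points, which is the form of Bruhat-Tits theory most often recorded in the literature.

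For the "moreover" assertion, assume $x \in I(\Ksch{L}, \Kq) \subseteq I(\Ksch{G},\Kq)$. Then the apartment of a maximal $\Kq$-split torus of $\Ksch{L}$ is a sub-apartment of $I(\Ksch{G},\Kq)$, and the set of affine roots of $\Ksch{L}$ taking non-negative values at $x$ (computed inside $I(\Ksch{L},\Kq)$) matches the corresponding set inside $I(\Ksch{G},\Kq)$. The functoriality of Bruhat-Tits integral models with respect to Levi subgroups then identifies $\RSch{L}{x}^{\Ksch{P}}$ with the parahoric $\RSch{L}{x}$ attached to $x$ in $I(\Ksch{L},\Kq)$. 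The required morphism $\RSch{\pi}{x} : \RSch{P}{x} \to \RSch{L}{x}$ is now simply the projection $\RSch{L}{x}\ltimes \RSch{U}{x} \to \RSch{L}{x}$ from the semidirect product structure, which recovers $\Ksch{\pi}:\Ksch{P}\to\Ksch{L}$ on generic fibres.

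The hard part is the geometric Iwahori factorization and the identification of $\RSch{L}{x}^{\Ksch{P}}$ with the Bruhat-Tits parahoric $\RSch{L}{x}$ when $x$ lies in the Levi sub-building; in practice this requires careful bookkeeping of affine root groups, the valuation of the root datum, and the compatibility of Bruhat-Tits functors with Levi subgroups. Everything else — smoothness, the identification of $\Rq$-points, and the construction of $\RSch{\pi}{x}$ — is a formal consequence once that factorization is in place.
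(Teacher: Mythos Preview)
Your proposal is correct and rests on the same Bruhat--Tits input as the paper (the smooth integral root subgroup schemes $\RSch{U_\alpha}{x}$ and the resulting product decomposition), but the two arguments are organized differently. The paper \emph{defines} $\RSch{P}{x}$ as the schematic closure of $\Ksch{P}$ in $\RSch{G}{x}$, gets the identity $\RSch{P}{x}(\Rq)=\Ksch{P}(\Kq)\cap\RSch{G}{x}(\Rq)$ for free from \cite{bruhat-tits:reductive-groups-2}*{1.7}, and then must \emph{prove} smoothness, which it does (after reducing to the Borel case) by exhibiting the product decomposition $\RSch{B}{x}\cong\RSch{T}{}\times\RSch{U}{x}$. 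You instead \emph{define} $\RSch{P}{x}$ directly as the semidirect product $\RSch{L}{x}^{\Ksch{P}}\ltimes\RSch{U}{x}$ inside the big cell, so smoothness is immediate and the burden shifts to checking the $\Rq$-points identity via the Iwahori decomposition. The two definitions agree (your flat closed subscheme with generic fibre $\Ksch{P}$ must be the schematic closure), so this is a reorganization rather than a different proof. Your treatment has two small advantages: you handle general parabolics rather than reducing to the Borel, and you make the construction of $\RSch{\pi}{x}$ explicit as a semidirect-product projection, whereas the paper simply declares the last point ``clear.''
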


\begin{proof}
Let $\RSch{P}{x}$ be the schematic closure of $\Ksch{P}$ in $\RSch{G}{x}$. Observe that $\Ksch{P}$ is a closed subscheme of $\Ksch{G}$ and recall that, by definition (\cf \cite{bosch-lutkebohmert-raynaud:neron}*{\S 2.5}, for example), $\RSch{P}{x}$ is the smallest closed sub-scheme of $\RSch{G}{x}$ containing $\Ksch{P}$. By \cite{yu:models}*{\S 2.6, Lemma}, $\RSch{P}{x}$ is a model of $\Ksch{P}$ and $\RSch{P}{x}$ is a subscheme of $\RSch{G}{x}$. Let $\RSch{P}{x} \to \RSch{G}{x}$ be the closed immersion extending $\Ksch{P} \hookrightarrow \Ksch{G}$ such that $\RSch{P}{x}(\Rq) = \Ksch{P}(\Kq) \cap \RSch{G}{x}(\Rq)$ \cite{bruhat-tits:reductive-groups-2}*{1.7}.

Now we show that the $\Rq$-scheme $\RSch{P}{x}$ is smooth. Let $\Ksch{T}$ be a maximal torus of $\Ksch{G}$ contained in $\Ksch{L}$ and let $\Phi$ be the root system determined by the pair $(G,T)$. To simplify the exposition, we give the proof of smoothness for the case when $P$ is a Borel subgroup $B$ with Levi $\Ksch{T}$.

Without loss of generality, suppose $x$ lies in the apartment for $T$.
Let $\RSch{T}{}$ be the N\'eron-Raynaud model for $T$.
Arguing as in the proof of \cite{yu:models}*{\S 7, Theorem}, write $\RSch{B}{x}$ as $\RSch{T}{} \times \RSch{U}{x}$, where $\RSch{U}{x}$ is the image of $\prod_{\alpha} \RSch{U_\alpha}{x}$ under multiplication, where the product is taken over all roots in $\Phi$ that are positive for $\Ksch{B}$ and where $\RSch{U_\alpha}{x}$ is the unique smooth integral model of the root subgroup $\Ksch{U}_\alpha \subset \Ksch{G}$ such that  $\RSch{U_\alpha}{x}(\Rq) = \Ksch{U}_\alpha(\Kq)_{x,0}$ (\cf \cite{bruhat-tits:reductive-groups-2}*{\S 4.3}). 
Since $\RSch{T}{}$ and $\RSch{U}{x}$ are smooth, and since the product is taken over $\Spec{\Rq}$, it follows that $\RSch{B}{x}$ is also smooth.

The last point is clear.
\end{proof}


\section{$\Gm$-equivariant base change}\label{section: Braden base change}

Notation from Proposition~\ref{proposition: geometry}.

\begin{proposition}\label{proposition: res cres}
Let $\Ksch{P}$ be a parabolic subgroup of $\Ksch{P}$ with Levi subgroup $\Ksch{L}$.  Suppose $x_0\in I(\Ksch{L},\Kq)\hookrightarrow I(\Ksch{G},\Kq)$ is hyperspecial. For every equivariant perverse sheaf $\fais{G}$ on $\KKsch{G}$,
\[
\cres{L}{x_0}\ \res^\KKsch{G}_\KKsch{P} \fais{G} 
\iso \res^\kkSch{G}{x_0}_\kkSch{P}{x_0} \ \cres{G}{x_0} \fais{G}.
\]
\end{proposition}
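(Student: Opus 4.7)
\emph{Proof plan.} Since $x_0$ is hyperspecial as an element of $I(\Ksch{L},\Kq)$, and hence also of $I(\Ksch{G},\Kq)$, both $\RSch{G}{x_0}$ and $\RSch{L}{x_0}$ are reductive group schemes over $\Spec{\Rq}$. Consequently the maximal-reductive-quotient morphisms $\nu_{\RSch{G}{x_0}}$ and $\nu_{\RSch{L}{x_0}}$ are identities of dimension zero, and so $\cres{G}{x_0}=\NC{\RRSch{G}{x_0}}$ and $\cres{L}{x_0}=\NC{\RRSch{L}{x_0}}$ (the Tate twists vanish). The proposition thus reduces to showing that nearby cycles intertwine the parabolic-restriction functors on the generic and special fibers of $\RRSch{G}{x_0}$.

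My strategy is to realize parabolic restriction as Braden hyperbolic localization for a suitable $\Gm$-action, carried out on the whole $\RRq$-model $\RRSch{G}{x_0}$ rather than only on its generic fiber. Choose a cocharacter $\lambda:\Gm\to Z(\KKsch{L})^\circ$ whose conjugation action on $\KKsch{G}$ has attractor $\KKsch{P}$ and fixed-point subscheme $\KKsch{L}$. Because $\RRSch{L}{x_0}$ is a reductive $\RRq$-group scheme, $\lambda$ lifts uniquely to an $\RRq$-cocharacter $\underline{\lambda}:\Gm\to\RRSch{L}{x_0}$, which then acts by conjugation on $\RRSch{G}{x_0}$. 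The attractor and fixed-point loci of this $\Gm$-action are smooth closed $\RRq$-subschemes of $\RRSch{G}{x_0}$ whose generic fibers are $\KKsch{P}$ and $\KKsch{L}$; by the uniqueness of the schematic-closure construction invoked in the proof of Proposition~\ref{proposition: geometry}, they coincide with $\RRSch{P}{x_0}$ and $\RRSch{L}{x_0}$. Under these identifications, both $\res^\KKsch{G}_\KKsch{P}$ on the generic fiber and $\res^\kkSch{G}{x_0}_\kkSch{P}{x_0}$ on the special fiber become the attractor hyperbolic-localization functor for the corresponding fiber of $\RRSch{G}{x_0}$.

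The proposition then follows from the commutation of nearby cycles with hyperbolic localization for $\RRSch{G}{x_0}$; the $\KKsch{G}$-equivariance of $\fais{G}$ supplies the $\Gm$-monodromicity required in order to apply Braden's theorem on each fiber. I expect the main technical obstacle to lie precisely in this commutation. Braden's theorem lets one rewrite the generic-fiber functor $\pi_! i^*$ as $\pi'_*(i')^!$ for the opposite parabolic, but the map $\pi'$ from the opposite parabolic to the Levi is still not proper, so the commutation does not reduce immediately to the standard compatibilities of $\NC{}$ with proper pushforward and closed immersions. One must instead establish, or import, a full Braden-type base-change theorem for nearby cycles in this $\Gm$-equivariant setting; I take this to be the content and purpose of Section~\ref{section: Braden base change}.
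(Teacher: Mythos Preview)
Your proposal is correct and matches the paper's approach: the proof reduces to showing that nearby cycles commutes with parabolic restriction, and the key technical input is precisely the Braden-type commutation over $\RRq$ that you anticipate and that the paper establishes as Lemma~\ref{lemma: Braden base change}. The only minor difference is that the paper factors $\res^\KKsch{G}_\KKsch{P}$ into pullback $i^*$ to $\KKsch{P}$ (handled by smooth base change, since $\RRSch{P}{x_0}$ and $\RRSch{G}{x_0}$ are both smooth over $\Rq$) and pushforward $\pi_!$ to $\KKsch{L}$ (handled by Lemma~\ref{lemma: Braden base change}, which invokes Braden in the form $\pi_!\cong\iota^!$ with $\iota:\KKsch{L}\hookrightarrow\KKsch{P}$ the Levi section of the contracting $\Gm$-action), rather than packaging the whole functor as hyperbolic localization via the opposite-parabolic form $\pi_!\,i^*\cong\pi'_*\,(i')^!$ that you cite.
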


\begin{proof}
\[
\begin{aligned}
&\cres{L}{x_0}\ \res^\KKsch{G}_\KKsch{P} \fais{G} \\
&:= \NC{\RRSch{L}{x_0}} (\pi_\KKsch{P})_{!}\ \left( \fais{G}\vert_\KKsch{P}\right) \\
&\iso (\kkSch{\pi}{x_0})_{!}\ \NC{\RRSch{P}{x_0}} \ \left( \fais{G}\vert_\KKsch{P}\right)  \hskip1cm \text{(Lemma~\ref{lemma: Braden base change})} \\
&\iso (\kkSch{\pi}{x_0})_{!}\  \left( \NC{\RRSch{L}{x_0}} \ \fais{G}\right)\vert_\kkSch{P}{x_0} \hskip1cm \text{(smooth base change)}\\
&=: \res^\kkSch{G}{x_0}_\kkSch{P}{x_0} \ \cres{G}{x_0} \fais{G}\\
\end{aligned}
\]
\end{proof}

\begin{lemma}\label{lemma: Braden base change}
Let $\Ksch{P}$ be a parabolic subgroup of $\Ksch{G}$ with Levi subgroup $\Ksch{L}$.  Suppose $x\in I(\Ksch{L},\Kq)\hookrightarrow I(\Ksch{G},\Kq)$. If $\fais{F}$ is an equivariant perverse sheaf on $\KKsch{P}$ then there is a canonical isomorphism
\[
\NC{\RRSch{L}{x}} (\KKSch{\pi}{x})_{!} \fais{F}
\iso
(\kkSch{\pi}{x})_{!}\ \NC{\RRSch{P}{x}}  \fais{F}.
\]
\end{lemma}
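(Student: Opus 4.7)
The plan is to reduce this base-change statement for the non-proper quotient $\RSch{\pi}{x}:\RSch{P}{x}\to\RSch{L}{x}$ to a base-change statement for the closed immersion of a Levi section, by appealing to the contraction principle — a special case of Braden's hyperbolic localization theorem. The key observation is that, after choosing a Levi section $\Ksch{L}\hookrightarrow\Ksch{P}$, the quotient $\pi$ is identified with the retraction onto the fixed locus of a contracting $\Gm$-action on $\Ksch{P}$: namely, the action coming from conjugation by a cocharacter $\lambda:\Gm\to\mathcal{Z}(\Ksch{L})$ whose weights on the Lie algebra of the unipotent radical $\Ksch{U}$ of $\Ksch{P}$ are strictly positive. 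Such $\lambda$ exists because $\Ksch{P}$ is a proper parabolic, and under it the action fixes $\Ksch{L}$ pointwise while contracting $\Ksch{U}$ to $\{e\}$ as $t\to 0$. As in the proof of Proposition~\ref{proposition: geometry}, this data extends integrally: $\RSch{P}{x}\iso\RSch{L}{x}\ltimes\RSch{U}{x}$, with a closed immersion $i_\RRq:\RRSch{L}{x}\hookrightarrow\RRSch{P}{x}$ and a compatible $\Gm$-action, giving the expected $i_\KKq$ on the generic fibre and $i_\kkq$ on the special fibre.

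The equivariance hypothesis on $\fais{F}$ includes, in particular, $\lambda(\Gm)$-equivariance. The contraction principle (a consequence of Braden's theorem for a $\Gm$-action admitting an attracting retraction onto its fixed locus) then yields a canonical isomorphism
\[
(\KKSch{\pi}{x})_{!}\,\fais{F}\iso i_\KKq^{*}\,\fais{F}.
\]
Applying $\NC_{\RRSch{L}{x}}$ and using the standard compatibility of nearby cycles with $*$-pullback along the closed immersion $i_\RRq$ — i.e., $\NC_{\RRSch{L}{x}}\, i_\KKq^{*}\iso i_\kkq^{*}\,\NC_{\RRSch{P}{x}}$ — yields
\[
\NC_{\RRSch{L}{x}}\,(\KKSch{\pi}{x})_{!}\,\fais{F}\iso i_\kkq^{*}\,\NC_{\RRSch{P}{x}}\,\fais{F}.
\]
Because nearby cycles preserve $\Gm$-equivariance, $\NC_{\RRSch{P}{x}}\fais{F}$ is itself $\Gm$-equivariant on $\kkSch{P}{x}$, so applying the contraction principle once more on the special fibre gives $(\kkSch{\pi}{x})_{!}\,\NC_{\RRSch{P}{x}}\,\fais{F}\iso i_\kkq^{*}\,\NC_{\RRSch{P}{x}}\,\fais{F}$. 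Composing these identifies the two sides of the lemma, and the naturality of the contraction principle (in the equivariant sheaf) ensures that the resulting isomorphism agrees with the canonical base-change map coming from the commutative square of $\RRq$-schemes.

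The main obstacle is verifying that the contracting $\Gm$-structure on $\Ksch{P}$ extends integrally in a way that simultaneously respects both fibres: one needs $\lambda$ to factor through a split subtorus of $\RSch{L}{x}$, and one needs the Levi section $\Ksch{L}\hookrightarrow\Ksch{P}$ to extend to a closed immersion of smooth $\Rq$-integral models compatible with $\RSch{\pi}{x}$. Both points reduce to the semidirect decomposition of $\RSch{P}{x}$ implicit in Proposition~\ref{proposition: geometry}, but some care is required because in general $\mathcal{Z}(\Ksch{L})$ need not be $\Kq$-split (for a twisted Levi one may have to extend to a finite unramified extension, as in hypothesis (H.2)). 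A secondary technicality is that the contraction principle is usually stated over a field; this is avoided by applying it fibre-by-fibre and stitching the two isomorphisms together with the closed-immersion compatibility of $\NC$, rather than invoking a relative form of Braden's theorem.
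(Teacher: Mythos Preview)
There are two genuine problems with the argument.

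First, the contraction principle does not give $\pi_! \iso i^*$; it gives $\pi_! \iso i^!$ (and, dually, $\pi_* \iso i^*$). Check this on $\mathbb{A}^1$ with the standard $\Gm$-action and the constant sheaf: $\pi_!\,\EE = H^*_c(\mathbb{A}^1) = \EE[-2] = i^!\,\EE$, whereas $i^*\,\EE = \EE$. The paper uses the correct form $\iota^! \iso \pi_!$ (its equation \eqref{equation: Braden base change 1}).

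Second, and more seriously, the step you call ``standard compatibility of nearby cycles with $*$-pullback along the closed immersion'' is not standard. Nearby cycles commute with smooth pullback and with proper push-forward, but there is no general isomorphism $\NC\, i^* \iso i^*\, \NC$ for a closed immersion $i$. Even after correcting to $i^!$, the needed compatibility $\NC_{\RRSch{L}{x}}\, i_\KKq^! \iso i_\kkq^!\, \NC_{\RRSch{P}{x}}$ unwinds (via the valid base change $(j_{\RRSch{L}{x}})_*\, i_\KKq^! \iso (i_\RRq)^!\, (j_{\RRSch{P}{x}})_*$) to the identity $(i_{\RRSch{L}{x}})^*\, (i_\RRq)^! \iso (i_\kkq)^!\, (i_{\RRSch{P}{x}})^*$, which mixes $(\cdot)^*$ and $(\cdot)^!$ across a non-smooth, non-proper square and is not one of the standard base-change isomorphisms. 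Your fibre-by-fibre strategy is therefore missing the bridge between the two applications of Braden.

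This is precisely the point where the paper does real work. Rather than apply the contraction principle only on the two fibres and attempt to stitch them, it verifies (facts B.3 and B.4) that the contracting $\Gm$-action and the equivariance extend to the integral model $\RRSch{P}{x}$, and then runs Braden's argument \emph{over $\RRq$} to obtain $(\RRSch{\iota}{x})^!\, (j_{\RRSch{P}{x}})_*\, \fais{F} \iso (\RRSch{\pi}{x})_!\, (j_{\RRSch{P}{x}})_*\, \fais{F}$ on the total space. After that, only genuinely standard base changes remain. You explicitly try to sidestep this ``relative Braden'' step in your last paragraph, but the substitute you offer --- closed-immersion compatibility of $\NC$ --- is exactly what fails.
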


\begin{proof}
The quotient $\pi : P \to L$ is not proper, so this is not an instance of proper base change. Instead, we must do some work.
The proof of Lemma~\ref{lemma: Braden base change} is obtained by introducing an action of $\mathbb{G}_{m,\Rq}$ on $\RSch{P}{x}$ and then adapting results from \cite{braden:hyperbolic}*{Lemma~6} and \cite{springer:purity}*{Corollary~1}. Appendix~\ref{appendix: toric schemes} explains how the terms `invariant-theoretic quotient' and `contracting' are used below.

In order the use \cite{braden:hyperbolic}, we must establish the following facts.
\begin{enumerate}
\item[(B.1)] $\Ksch{\pi} : \Ksch{P} \to \Ksch{L}$ is the invariant-theoretic quotient of a contracting $\Kq$-action of $\Gm_{,\Kq}$ on $\Ksch{P}$;
\item[(B.2)] $\fais{F}$ is equivariant for the action of $\Gm_{,\KKq}$ on $\KKsch{P}$ obtained by extension of scalars;
\item[(B.3)]
$\RSch{\pi}{x} : \RSch{P}{x} \to \RSch{L}{x}$ is the invariant-theoretic quotient of a contracting $\Rq$-action of $\Gm_{,\Rq}$ on $\RSch{P}{x}$;
\item[(B.4)] $(j_\RRSch{P}{x})_*\ \fais{F}$ is equivariant for the action of  $\Gm_{,\RRq}$ on $\RRSch{P}{x}$ obtained by extension of scalars.
\end{enumerate}
Although B.1 and B.2 actually follow from B.3 and B.4, we begin by explaining what B.1 and B.2 mean and how to use them to prove part of this lemma, before moving on to the more complicated statements B.3 and B.4 and how to use them. To simplify the exposition, here we only treat the case when $P$ is a Borel subgroup.  

As in the proof of Proposition~\ref{proposition: geometry}, let $T$ be a maximal torus of $G$ contained in $L$ and let $\Phi$ be the root system determined by the pair $(G,T)$. Let $B$ be a Borel subgroup of $G$ with Levi $T$; let $U \supseteq U_P$ be the unipotent radical of $B$. Let $\Phi^+$ be the set of roots in $\Phi$ that are positive for $B$. Let $\delta$ be the character of $Z_T=T$ defined by $\delta = \frac{1}{2}\sum_{\alpha\in\Phi^+} \alpha$; this is a (strongly) dominant weight and the co-character $\check{\delta}$ is a dominant cocharacter. Then $\mu: \Gm_{,\Kq}\times B \to B$, defined by $\mu : (z,b) \mapsto\check{\delta}(z)\, b\, \check{\delta}(z)^{-1}$, is an action of $\Gm_{,\Kq}$ on $B$. Moreover, because $\check{\delta}$ centralizes $T$, the restriction of $\mu$ to $\Gm_{,\Kq}\times U$ defines an action $\mu_{U}$ of $\Gm_{,\Kq}$ on $U$ such that $\mu(z,u\rtimes t) = \mu_{U}(z,u)\rtimes t$, with reference to $B \iso U\rtimes T$. Accordingly, $B^{\Gm_{,\Kq}}\iso U^{\Gm_{,\Kq}} \rtimes T$. Using the classical monomorphisms $u_\alpha : \Gadd_{,\Kq} \to U$ with image $U_\alpha$ (see \cite{springer:lag2} for example) and their fundamental properties (see 
 or \cite{springer-steinberg:conj}*{1.1, 1.2(b)}, for example), it follows that $U_B^{\Gm_{,\Kq}} = 1$.  (The main point here is $U \iso \prod_{\alpha \in \Phi^+} U_\alpha$ and $t u_\alpha(\xi) t^{-1} = u_\alpha( \alpha(t) \xi)$.) Thus, $B^{\Gm_{,\Kq}} = T$. Moreover, it follows from these same fundamental facts that there is a map $\bar\mu: \mathbb{A}^1_\Kq \times B \to B$ such that the following diagram commutes,
\[
\xymatrix{
\Gm_{,\Kq} \times B \ar[rr]^{\mu} \ar[dr] && B \\
& \mathbb{A}^1_\Kq \times B \ar[ur]_{\bar\mu} 
}
\]
where the bottom-left arrow is the open subscheme in the first component and the identity on the second.  Since this is exactly what it means to say that the action of $\Gm_{,\Kq}$ on $B$ is contracting, and since $B^\Gm = T$, it follows automatically that $T = B \git \Gm_{,\Kq}$ (\cf Appendix~\ref{appendix: toric schemes}) and that $\pi : B \to T$ is the invariant-theoretic quotient (\cf Appendix~\ref{appendix: toric schemes}). Thus,  B.1 is established.

Extending scalars from $\Kq$ to $\KKq$ gives an $\KKq$-action of $\Gm_{,\KKq}$ on $\KKsch{B}$ which again is contracting. Because $\fais{F}$ is an equivariant perverse sheaf for conjugation, and because the $\KKq$-action of $\Gm_{,\KKq}$ on $\KKsch{B}$ is defined by conjugation by the co-character $\check{\delta}$, $\fais{F}$ is equivariant for this action also. This establishes B.2. 

We now explain the significance of facts B.1 and B.2. Let $\KKsch{\iota} : \KKsch{L} \hookrightarrow \KKsch{P}$ be inclusion. This is a section of $\KKsch{\pi} : \KKsch{P} \to \KKsch{L}$. Thus, $\KKsch{\pi} \circ \KKsch{\iota}$ is the identity morphism of $\KKsch{L}$, so $\KKsch{\pi}_*\ \KKsch{\iota}_*$ is isomorphic to the identity functor. Composing $\KKsch{\pi}_*$ with the adjunction morphism $\id \to \KKsch{\iota}_*\ \KKsch{\iota}^*$, we thereby obtain a morphism of functors $\KKsch{\pi}_* \to \KKsch{\iota}^*$ and, dually, $\KKsch{\iota}^! \to \KKsch{\pi}_!$. Now, because of B.1 and B.2, \cite{braden:hyperbolic}*{\S 6} applies and shows that these morphisms of functors induce isomorphisms on $\Gm_{,\KKq}$-equivariant sheaves! In particular, 
\begin{equation}\label{equation: Braden base change 1}
\KKsch{\iota}^!\fais{F} \iso \KKsch{\pi}_!\fais{F}
\end{equation}
Accordingly,
\begin{equation}\label{equation: Braden base change 2}
 \NC{\RRSch{L}{x}} \KKsch{\pi}_{!}\ \fais{F} \iso (i_\RRSch{L}{x})^*\ (j_\RRSch{L}{x})_*\ \KKsch{\iota}^{!}\ \fais{F}.
 \end{equation}

With reference to notation from Proposition~\ref{proposition: geometry}, let $\RSch{\iota}{x} : \RSch{L}{x} \hookrightarrow \RSch{P}{x}$ be inclusion. Then $\KKSch{\iota}{x} = \KKsch{\iota}$ and, by base change,
\begin{equation}\label{equation: Braden base change 3}
(j_\RRSch{L}{x})_*\ (\KKSch{\iota}{x})^{!}\ \fais{F}
\iso 
 (\RRSch{\iota}{x})^!\ (j_\RRSch{P}{x})_*\ \fais{F}.
 \end{equation}

Arguing as above, we obtain a morphism of functors $(\RRSch{\iota}{x})^! \to (\RRSch{\pi}{x})_!$. We will see that this is an isomorphism of functors on the appropriate category of sheaves. To do this, we turn to B.3 and B.4.

In order to prove B.3 and B.4 we may suppose, without loss of generality, that $x_0$ lies in the apartment determined by $T$. Set $\RSch{T}{}\ceq \RSch{T}{x}$; this is the N\'eron-Raynaud model for $\Ksch{T}$. Each co-character of $T$ extends to a $\Kq$-morphism $\Gm_{,\Rq} \to \RSch{T}{}$, as in the proof of \cite{cunningham-salmasian:sheaves}*{Proposition~4}. Using \cite{cunningham-salmasian:sheaves}*{\S 2.3}, define an $\Rq$-action $\RSch{\mu}{x} : \Gm_{,\Rq}\times \RSch{B}{x} \to \RSch{B}{x}$ as above (using the extension of the dominant co-character $\check{\delta}$). To see that this action is contracting (\cf Appendix~\ref{appendix: toric schemes}) recall, for each $\alpha \in \Phi$, the smooth integral scheme $\RSch{U_\alpha}{x}$ for $U_\alpha$ that appeared in the proof of Proposition~\ref{proposition: geometry}, and also $\RSch{U}{x}$. Each $\RSch{U_\alpha}{x}$ comes equipped with a $\Kq$-morphism $\RSch{u_\alpha}{x} : \Gadd \to \RSch{U_\alpha}{x}$ that satisfies the analogue of \cite{springer-steinberg:conj}*{1.1} and $\RSch{U}{x}$ satisfies the analogue \cite{springer-steinberg:conj}*{1.2(b)} with regards to the additive schemes $\RSch{U_\alpha}{x}$! 
\[
\xymatrix{
\Gm_{,\Rq} \times \RSch{B}{x} \ar[rr]^{\RSch{\mu}{x}} \ar[dr] && \RSch{B}{x} \\
& \mathbb{A}^1_\Rq \times \RSch{B}{x} \ar[ur]_{\RSch{\bar\mu}{x}} 
}
\]
The proof of B.1, above, adapts to the present context, and gives B.3.
The fact that $j_\RSch{P}{x}$ is morphism of group schemes gives B.4.

The final miracle is that the proof in \cite{braden:hyperbolic}*{\S 6}, which is largely formal, applies to the category of $\RRq$-schemes.
Accordingly, facts B.3 and B.4 determine
\begin{equation}\label{equation: Braden base change 4}
(\RRSch{\iota}{x})^!\ (j_\RRSch{P}{x})_*\ \fais{F} \iso (\RRSch{\pi}{x})_!\ (j_\RRSch{P}{x})_*\ \fais{F}.
\end{equation}
Thus,
\begin{equation}\label{equation: Braden base change 5}
(i_\RRSch{L}{x})^*\ (\RRSch{\iota}{x})^!\ (j_\RRSch{P}{x})_*\ \fais{F}
\iso
(i_\RRSch{L}{x})^*\ (\RRSch{\pi}{x})_!\ (j_\RRSch{P}{x})_*\ \fais{F}.
\end{equation}
By base change,
\begin{equation}\label{equation: Braden base change 6}
(i_\RRSch{L}{x})^*\ (\RRSch{\pi}{x})_!\ (j_\RRSch{P}{x})_*\ \fais{F}
\iso
(\kkSch{\pi}{x})_{!} \ (i_\RRSch{P}{x})^*\ (j_\RRSch{P}{x})_*\ \fais{F},
\end{equation}
and by the definition of the nearby cycles functor,
\begin{equation}\label{equation: Braden base change 7}
(\kkSch{\pi}{x})_{!} \ (i_\RRSch{P}{x})^*\ (j_\RRSch{P}{x})_*\ \fais{F}
\iso (\kkSch{\pi}{x})_{!}\ \NC{\RRSch{P}{x}} \ \fais{F}.
\end{equation}
Combining Equations \eqref{equation: Braden base change 2},  \eqref{equation: Braden base change 3}, \eqref{equation: Braden base change 5}, \eqref{equation: Braden base change 6} and \eqref{equation: Braden base change 7} gives the proof of Lemma~\ref{lemma: Braden base change}.
\end{proof}

\section{Nearby cycles of cuspidal character sheaves}

\begin{proposition}\label{proposition: nc of ccs}
Suppose $\Ksch{G}$ is a connected, reductive linear algebraic group over a non-Archimedean local field $\Kq$. If $\RSch{G}{x_0}$ is hyperspecial and $\fais{G}$ is a cuspidal character sheaf of $\KKsch{G}$ then $\cres{G}{x_0}\fais{G}$ is a strongly cuspidal perverse sheaf on $\kkSch{G}{x_0}$.
\end{proposition}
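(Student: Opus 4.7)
The plan is to verify the two defining conditions, SC.1 (equivariance) and SC.2 (vanishing of parabolic restriction), for $\cres{G}{x_0}\fais{G}$. Since $x_0$ is hyperspecial, $\RSch{G}{x_0}$ is reductive over $\Rq$, the reduction morphism $\nu_{\RSch{G}{x_0}}$ is an isomorphism, and $\cres{G}{x_0}\fais{G}$ coincides, up to Tate twist, with the nearby cycles sheaf $\NC{\RRSch{G}{x_0}}\fais{G}$ on $\kkSch{G}{x_0}$; in particular, it is already perverse.

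For SC.1, Lusztig's identification of cuspidality with strong cuspidality for character sheaves supplies an integer $n$, invertible in $\KKq$, such that $\fais{G}$ is equivariant under the action $(g,z)\colon h\mapsto z^n g h g^{-1}$ of $\KKsch{G}\times\mathcal{Z}^\circ_{\KKsch{G}}$ on $\KKsch{G}$. Replacing $n$ by a suitable multiple, one may further assume $n$ is coprime to the residual characteristic. Because $\RSch{G}{x_0}$ is reductive over $\Rq$, this action is the generic fibre of an $\Rq$-action of $\RSch{G}{x_0}\times\mathcal{Z}^\circ_{\RSch{G}{x_0}}$ on $\RSch{G}{x_0}$, and the compatibility of nearby cycles with smooth group-scheme actions transports the equivariance of $\fais{G}$ to $\NC{\RRSch{G}{x_0}}\fais{G}$.

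For SC.2, fix any proper parabolic $\bar P\subseteq\kkSch{G}{x_0}$. After passing to a sufficiently large finite unramified extension $\Lq/\Kq$, the parabolic $\bar P$ descends to the residue field of $\Lq$, and by the standard theory of parabolic subgroup schemes of reductive group schemes over Henselian local rings it lifts to a parabolic subgroup scheme of $\RSch{G}{x_0}\times_{\Rq}\RLq$. Its generic fibre is a proper parabolic $P$ of $\Ksch{G}\times_{\Spec{\Kq}}\Spec{\Lq}$ with Levi $L$ such that $x_0\in I(L,\Lq)$; let $P'$ denote the further base change of $P$ to $\KKq$. Proposition~\ref{proposition: res cres}, applied over $\Lq$ (which is legitimate because that proposition, together with the results in Sections~\ref{subsection: geometry} and~\ref{section: Braden base change}, is insensitive to replacing $\Kq$ with an unramified extension, and compact restriction commutes with unramified base change), yields
\[
\res^{\kkSch{G}{x_0}}_{\bar P}\cres{G}{x_0}\fais{G}
\iso \cres{L}{x_0}\res^{\KKsch{G}}_{P'}\fais{G},
\]
and the right-hand side vanishes because $\fais{G}$ is cuspidal and $P'$ is a proper parabolic of $\KKsch{G}$.

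The main obstacle I anticipate is the rationality step in SC.2: $\bar P$ is in general only defined over $\kkq$, so the argument must simultaneously descend $\bar P$ to an unramified extension, lift the descended parabolic through the reduction map over $\RLq$, and verify that $x_0$ lies in $I(L,\Lq)$ in the precise sense required by Proposition~\ref{proposition: res cres}. Condition SC.1 is largely formal, though one must take care to choose the weight $n$ so that it reduces sensibly to the special fibre.
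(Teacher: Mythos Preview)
Your proposal is correct and follows essentially the same strategy as the paper: verify SC.1 and SC.2 directly, reducing SC.2 to the vanishing of $\res^{\KKsch{G}}_{P'}\fais{G}$ via Proposition~\ref{proposition: res cres} after an unramified base change. The only noteworthy difference is in how the proper parabolic on the special fibre is lifted to the generic fibre. You invoke the SGA3-type classification of parabolic subgroup schemes of a reductive group scheme over a Henselian local ring, which gives the lift directly and makes $\RSch{P}{x_0'}$ and its Levi manifestly the right integral models. The paper instead uses building geometry: it chooses a point $x'$ in $I(G,\Lq)$ with $x' > x_0'$ so that the parabolic $P_{x_0'\le x'}$ of $\kSch{G}{x_0'}$ equals the given $\bar P$ (using the local identification of the star of $x_0'$ with the spherical building of $\kSch{G}{x_0'}$), and then reads off $P\subset G_{\Lq}$ from the relative position of $x_0'$ and $x'$. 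Both routes produce the same parabolic and the same schematic closure $\RSch{P}{x_0'}$; your approach is more direct, while the paper's makes the condition $x_0'\in I(L,\Lq)$ immediate from the construction. For SC.1 your argument (extend the action over $\Rq$ and use compatibility of nearby cycles with smooth group actions) is more explicit than the paper's brief pointer.
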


\begin{proof}
First we show that $\cres{G}{x_0}\fais{G}$ is a strongly cuspidal perverse sheaf.
Let $\kksch{Q}$ be a proper parabolic subgroup of $\rkkSch{G}{x_0}$; let $\kksch{M}$ be the Levi subgroup of $\kksch{Q}$. We will see that $\res^\rkkSch{G}{x_0}_\kksch{Q} \cres{G}{x_0}\fais{G} =0$.  

The parabolic subgroup $\kksch{Q}$ is defined over some finite extension $\lq$ of $\kq$, so we write $\kksch{Q} = \sch{Q}\times_\Spec{\lq} \Spec{\kkq}$ where $\sch{Q}$ is a linear algebraic group over $\lq$. Let $\sch{M}$ be the reductive quotient of $\sch{Q}$. Let $\Lq$ be the unramified extension of $\Kq$ in $\KKq$ with residue field $\lq$. Let $x_0'$ denote the image of $x_0$ under $I(\Ksch{G},\Kq) \hookrightarrow I(\Ksch{G},\Lq)$ and let $\RSch{G}{x_0'}$ be the parahoric group scheme for $\Lsch{G}$ determined by $x_0'$. (Since $\Lq/\Kq$ is unramified, $\RSch{G}{x_0'} = \RSch{G}{x_0}\times_\Spec{\Rq}\Spec{\RLq}$.
Pick  $x'\in I(\Ksch{G},\Lq)$ such that $x' > x_0'$ and $\rkPSch{P}{x_0'\leq x'} = \sch{Q}$ and $\rlSch{G}{x'} = M$, where $\rkPSch{P}{x_0'\leq x'}$ is as defined in \cite{cunningham-salmasian:sheaves}*{\S 2.1} (where it is denoted by $\rkPSch{P}{x\leq y}$). (Such an $x'$ can be found because, locally, the affine building at $x_0'$ corresponds to the (spherical) building for $\rlSch{G}{x_0'}$ \cite{landvogt:compactification}.)
Then
\[
\res^\kkSch{G}{x_0}_\kksch{Q}\ \cres{G}{x_0}\fais{G}
=
\res^\kkSch{G}{x_0}_\kksch{Q}\ \cres{G}{x_0'}\fais{G}
=
\res^\kkSch{G}{x_0'}_\rkkPSch{Q}{x_0'\leq x'}\ \cres{G}{x_0'}\fais{G}.
 \]
On the other hand, the relative position of $x_0'$ and $x'$ in $I(\Ksch{G},\Lq)$ also determines a proper parabolic subgroup $\sch{P}$ of $\Lsch{G}$ such that $\rlSch{P}{x_0'} = \rkPSch{P}{x_0'\leq x'}$.  Thus,
\[
\res^\kkSch{G}{x_0'}_\rkkPSch{P}{x_0'\leq x'}\ \cres{G}{x_0'}\fais{G}
=
\res^\kkSch{G}{x_0'}_\kkSch{P}{x_0'}\ \cres{G}{x_0'}\fais{G}.
\]
It follows from  Proposition~\ref{proposition: res cres} (with $\Kq$ replaced by $\Lq$) that,
\[
\res^\kkSch{G}{x_0'}_\kkSch{P}{x_0'}\ \cres{G}{x_0'}\fais{G}
=
\cres{L}{x_0'}\ \res^\KKsch{G}_\KKsch{P} \fais{G},
\]
where $\Ksch{L}$ is the Levi subgroup of $\Ksch{P}$. (Observe that $x_0'$ lies in the image of $I(\Ksch{L},\Lq) \hookrightarrow I(\Ksch{G},\Lq)$, by design.) We have used the fact that $\fais{G}$ is a cuspidal character sheaf since it is strongly cuspidal. Since $P$ is a proper parabolic subgroup of $G$, it follows that $\res^\KKsch{G}_\KKsch{P} \fais{G}=0$. 
We have now seen that $\res^\rkkSch{G}{x_0}_\kksch{Q}\ \cres{G}{x_0}\fais{G}=0$ for every proper parabolic subgroup $\kksch{Q}$ of $\rkkSch{G}{x_0}$. Thus, $\cres{G}{x_0}\fais{G}$ satisfies condition SC.2 (\cf Section~\ref{section: pramod}). To verify condition SC.1 one uses \cite{lusztig:character-sheaves-I}*{1.9.1}, as in the proof of Proposition~\ref{proposition: Pramod}.
\end{proof}

\section{Nearby cycles of cuspidal character sheaves are semisimple}

\begin{proposition}\label{proposition: semisimple}
Suppose $\Ksch{G}$ is a connected, reductive linear algebraic group over non-Archimedean local field $\Kq$ of odd or zero characteristic. If $\RSch{G}{x_0}$ is hyperspecial and $\fais{G}$ is a cuspidal character sheaf of $\KKsch{G}$ then $\cres{G}{x_0}\fais{G}$ is a direct sum of cuspidal character sheaves on $\kkSch{G}{x_0}$.
\end{proposition}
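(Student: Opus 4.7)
The plan is to verify that the hypotheses of Proposition~\ref{proposition: Pramod} are met by $\cres{G}{x_0}\fais{G}$ on the group $\kkSch{G}{x_0}$, whereupon the desired decomposition as a direct sum of cuspidal character sheaves follows immediately.

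First I would observe that since $x_0$ is hyperspecial, the parahoric group scheme $\RSch{G}{x_0}$ is reductive over $\Rq$, so its special fibre $\kSch{G}{x_0}$ is already reductive, $\nu_\RSch{G}{x_0}$ is the identity, and $\rkkSch{G}{x_0} = \kkSch{G}{x_0}$ is a connected reductive linear algebraic group over the algebraically closed residue field $\kkq$. This places us precisely in the setting of Section~\ref{section: pramod}, with $\ff = \kkq$ and $\ffsch{G} = \kkSch{G}{x_0}$.

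Next, I would invoke Proposition~\ref{proposition: nc of ccs}, which identifies $\cres{G}{x_0}\fais{G}$ as a strongly cuspidal perverse sheaf on $\kkSch{G}{x_0}$. Under the hypothesis that $\Kq$ has odd or zero (residual) characteristic, the characteristic of $\kkq$ is different from $2$; by the Lusztig--Shoji--Ostrik results recalled in Section~\ref{section: pramod} (especially \cite{ostrik:cuspidal}*{Theorem~1}), any connected reductive linear algebraic group over an algebraically closed field of characteristic different from $2$ is clean, so in particular $\kkSch{G}{x_0}$ is clean. Proposition~\ref{proposition: Pramod} then applies verbatim to the strongly cuspidal perverse sheaf $\cres{G}{x_0}\fais{G}$ and yields the required decomposition.

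I would not expect a substantive obstacle in this proof: the two hard inputs---strong cuspidality of $\cres{G}{x_0}\fais{G}$ (Proposition~\ref{proposition: nc of ccs}) and the passage from strong cuspidality to a direct sum of cuspidal character sheaves in the presence of cleanness (Proposition~\ref{proposition: Pramod})---are already in hand. The only real content, and the step most liable to hide a subtlety, is the verification that the characteristic hypothesis on $\Kq$ is exactly what is needed to secure cleanness of $\kkSch{G}{x_0}$ on the residue-field side; once that is checked, the conclusion is a three-line assembly of the results already established.
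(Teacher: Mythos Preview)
Your proposal is correct and follows essentially the same approach as the paper's proof: invoke Proposition~\ref{proposition: nc of ccs} to obtain strong cuspidality of $\cres{G}{x_0}\fais{G}$, use the characteristic hypothesis together with Ostrik's cleanness theorem to conclude that $\kkSch{G}{x_0}$ is clean, and then apply Proposition~\ref{proposition: Pramod}. The paper's proof additionally cites \cite{ostrik:cuspidal}*{Theorem~2.12} (simple cuspidal perverse sheaves are character sheaves), but this is already absorbed into the discussion preceding Proposition~\ref{proposition: Pramod}, so your omission of it is harmless.
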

 
\begin{proof}
By proposition~\ref{proposition: nc of ccs}, $\cres{G}{x_0}\fais{G}$ is a strongly cuspidal perverse sheaf on $\rkkSch{G}{x_0}$. Since the characteristic of $\Kq$ is not $2$, the residual characteristic of $\Kq$ is odd. Accordingly, $\rkkSch{G}{x_0}$ is clean (\cite{ostrik:cuspidal}*{Theorem~1}, improving\cite{lusztig:character-sheaves-V}*{Theorem~23.1~(a)}) and every simple cuspidal perverse sheaf on $\kkSch{G}{x_0}$ is a character sheaf (\cite{ostrik:cuspidal}*{Theorem~2.12} improving \cite{lusztig:character-sheaves-V}*{Theorem~23.1~(b)}). It now follows from Proposition~\ref{proposition: Pramod} that $\cres{G}{x_0}\fais{G}$ is a direct sum of cuspidal character sheaves.
\end{proof}

\begin{remark}\label{remark: semisimple}
If $\fais{G}$ is a cuspidal character sheaf of $\KKsch{G}$ and $x\in I(\Ksch{G},\Kq)$ is not hyperspecial and the star of $x$ contains a hyperspecial vertex, then $\cres{G}{x}\fais{G} =0$. This follows from the proof of Proposition~\ref{proposition: res cres} and \cite{cunningham-salmasian:sheaves}*{Theorem~1}. We will not use that fact in this paper.
\end{remark}

\section{A little more geometry}\label{subsection: more geometry}

\begin{proposition}\label{proposition: more geometry}
Let $\Ksch{G}$ be a connected, reductive linear algebraic group over a non-Archimedean local field $\Kq$. 
For every parabolic subgroup $\Ksch{P}\subseteq \Ksch{G}$ and every $x \in I(\Ksch{G},\Kq)$ there is a smooth integral model $\RSch{G}{x}/\RSch{P}{x}$ for $\Ksch{G}/\Ksch{P}$, and a principal fibration $\RSch{G}{x} \to \RSch{G}{x}/\RSch{P}{x}$ with group $\RSch{P}{x}$ such that the special fibre of $\RSch{G}{x}/\RSch{P}{x}$ is the quotient variety $\kSch{G}{x}/\kSch{P}{x}$.
\end{proposition}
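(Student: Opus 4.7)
The plan is to construct $\RSch{G}{x}/\RSch{P}{x}$ as the fppf quotient of $\RSch{G}{x}$ by the right translation action of $\RSch{P}{x}$ (using the closed immersion $\RSch{P}{x}\hookrightarrow\RSch{G}{x}$ of Proposition~\ref{proposition: geometry}) and then verify the stated properties by working fibre-by-fibre over $\Spec\Rq$. First I would check that this action is free in the scheme-theoretic sense, \ie\ that the morphism $\RSch{G}{x}\times_\Rq \RSch{P}{x} \to \RSch{G}{x}\times_\Rq \RSch{G}{x}$, $(g,p)\mapsto(g,gp)$, is a closed immersion. This can be verified on the two fibres: on the generic fibre the action of $\Ksch{P}$ on $\Ksch{G}$ by right translation is free with quotient the flag variety $\Ksch{G}/\Ksch{P}$, and on the special fibre $\kSch{P}{x}$ is a closed subgroup scheme of $\kSch{G}{x}$ acting freely. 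Flatness of $\RSch{G}{x}$ and $\RSch{P}{x}$ over $\Spec\Rq$ then gives freeness over $\Spec\Rq$.

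Given a free action of the smooth affine group scheme $\RSch{P}{x}$ on the smooth scheme $\RSch{G}{x}$, the standard result (\cf\ SGA 3, Expos\'e V, or Anantharaman's treatment of group schemes over Dedekind bases) produces the quotient $\RSch{G}{x}/\RSch{P}{x}$ as an algebraic space over $\Spec\Rq$, together with the structure of an $\RSch{P}{x}$-torsor $\RSch{G}{x}\to \RSch{G}{x}/\RSch{P}{x}$ in the fppf topology. To upgrade this algebraic space to a scheme I would use a direct construction via the big cell coming from the Bruhat decomposition: working in the apartment of a maximal torus $\Ksch{T}\subseteq\Ksch{L}$ and using the smooth integral models $\RSch{U_\alpha}{x}$ of the root subgroups (already built in Proposition~\ref{proposition: geometry}), one obtains an $\Rq$-open subscheme $\prod_{\alpha\in\Phi^-_{\Ksch{P}}}\RSch{U_\alpha}{x}\subset \RSch{G}{x}/\RSch{P}{x}$ that trivializes the torsor. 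Translating by representatives of Weyl group elements (which can be chosen integrally because $\RSch{G}{x}$ is a smooth group scheme containing these Weyl elements in its $\Rq$-points) gives a Zariski cover of the algebraic space by affine open subschemes, exhibiting $\RSch{G}{x}/\RSch{P}{x}$ as a scheme.

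Smoothness of $\RSch{G}{x}/\RSch{P}{x}$ follows from the fact that $\RSch{G}{x}\to \RSch{G}{x}/\RSch{P}{x}$ is a smooth surjection (since $\RSch{P}{x}$ is smooth) together with the smoothness of $\RSch{G}{x}$, by descent. The principal fibration statement is immediate from the torsor construction. Finally, the identification of the special fibre with $\kSch{G}{x}/\kSch{P}{x}$ follows from the compatibility of fppf quotients with base change along $\Spec\kq\to\Spec\Rq$, since the action of $\RSch{P}{x}$ base-changes to the action of $\kSch{P}{x}$ on $\kSch{G}{x}$.

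The main obstacle is representability by a scheme rather than merely an algebraic space: the abstract quotient theory does not, in this generality, give a scheme directly. This is where the explicit trivialization by the big cell must intervene, and where the hypothesis that $\RSch{P}{x}$ was built as a smooth closed subscheme of $\RSch{G}{x}$ in Proposition~\ref{proposition: geometry} is used in an essential way — without a sufficiently well-behaved model of $\Ksch{P}$, neither the freeness of the action nor the existence of compatible big-cell charts over $\Spec\Rq$ would be available.
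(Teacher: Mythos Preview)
Your approach is sound and takes a genuinely different route from the paper's. The paper constructs $\RSch{G}{x}/\RSch{B}{x}$ directly by gluing: it introduces the root subsystem $\Phi_x = \{\alpha \in \Phi \tq \vec\alpha(x) = 0\}$ with Weyl group $W_x$, builds an affine chart $\AA^{l(w_0)}_\Rq$ (with $w_0$ the longest element of $W_x$) from the integral big cell $\RSch{U_{w_0}}{x}\,\dot{w_0}\,\RSch{B}{x}$, and then glues the $W_x$-conjugates of this chart. There is no algebraic-space intermediary; the principal fibration and the identification of the special fibre are read off from the explicit local description.

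Your route --- fppf quotient as an algebraic space, then representability via big-cell translates --- is conceptually cleaner and makes the torsor property and the special-fibre identification immediate from general base-change principles, at the cost of deferring the real work to the representability step. One caution on that step: your assertion that Weyl representatives lie in $\RSch{G}{x}(\Rq)$ is precisely where the parahoric structure bites. For general $x$, only the subgroup $W_x \subseteq W$ (the Weyl group of the reductive quotient of the special fibre) has representatives in $\RSch{G}{x}(\Rq)$; for instance, if $x$ lies in the interior of an alcove this group is trivial. It is the $W_x$-translates that the paper uses, and your sketch should be read with $W_x$ in place of the full $W$ and with the big cell taken relative to $\Phi_x$ rather than $\Phi$. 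The paper's direct construction makes this point visible from the outset, whereas in your framework it surfaces only when one checks that the chosen translates actually cover both fibres of the quotient.
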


\begin{proof}
To simplify the exposition we replace $\Ksch{P}$ by a Borel subgroup $\LBsch{B}$ and construct $\RSch{G}{x} \to \RSch{G}{x}/\RSch{B}{x}$. Standard techniques extend this construction to give $\RSch{G}{x} \to \RSch{G}{x}/\RSch{P}{x}$.

We construct $\RSch{G}{x}/\RSch{B}{x}$ and the fibration $\RSch{G}{x}\to \RSch{G}{x}/\RSch{B}{x}$. 
With $\Phi$ as in the proof of Proposition~\ref{proposition: geometry}, let $\Phi_{x}$ (resp. $\Phi_{x}^+$) be the set of roots $\alpha\in \Phi$ (resp. $\alpha\in \Phi^+$) for which $\vec{\alpha}(x)=0$, where $\vec{\alpha}$ is an affine root of $\Ksch{G}$ with vector part equal to $\alpha$. Also, let $W_{x}$ be the Weyl group for the root system $\Phi_{x}$. For each $w\in W_{x}$, define $\Phi_{x}(w)^+ \ceq \{ \alpha \in \Phi_{x}^+ \tq w(\alpha) \in \Phi_{x}^-\}$.
The image of $\prod_{\alpha\in \Phi_{x}(w)^+} \RSch{U_\alpha}{x}$ under the multiplication map to $\RSch{G}{x}$ will be denoted by $\RSch{U_w}{x}$. Let $\RSch{G_w}{x} \subset \RSch{G}{x}$ be the (locally closed) subscheme $\RSch{U_w}{x} \dot{w} \RSch{B}{x}$, where $\dot{w} \in \RSch{G}{x}(\Rq)$ is a representative for $w$. Then $\RSch{U_w}{x}$ is isomorphic to $\AA^{l(w)}_\sch{S}$ and $\RSch{G_w}{x}$ is isomorphic to $\AA^{l(w)}_\sch{S} \times \RSch{B}{x}$. Let $w_0$ be the Coxeter element in $W_{x}$ (recall that $\Phi_{x}$ is a reduced root system). Then $\RSch{G_{w_0}}{x}\subset \RSch{G}{x}$ is an open subscheme and $\RSch{G}{x} = \mathop{\cup}\limits_{w\in W_{x}} \dot{w} \RSch{G_{w_0}}{x} \dot{w}^{-1}$ is an open covering.

We can now define $\RSch{G}{x}/\RSch{B}{x}$ by gluing data, as follows. For each $w\in W_{x}$, let $\RSch{b(w)}{x}: \dot{w}\RSch{G_{w_0}}{x} \dot{w}^{-1} \to \AA^{l(w_0)}_\Rq$ be the obvious map (conjugate to $\RSch{G_{w_0}}{x}$, then use $\RSch{G_{w_0}}{x}\iso \AA^{l(w_0)}_\Rq \times \RSch{B}{x}$ and finally project to $\AA^{l(w_0)}_\Rq$). For each pair $w_1, w_2\in W_{x}$, set $V_{w_1} = \AA^{l(w_0)}_\Rq$; also, let $V_{w_1,w_2}$ be the image of $\dot{w_1}\RSch{G_{w_0}}{x} \dot{w_1}^{-1} \cap \dot{w_2}\RSch{G_{w_0}}{x} \dot{w_2}^{-1}$ under $\RSch{b(w_1)}{x} : \dot{w_1}\RSch{G_{w_0}}{x} \dot{w_1}^{-1} \to \AA^{l(w_0)}_\Rq$. For each pair $w_1, w_2\in W_{x}$, 
glue $V_{w_1}$ to $V_{w_2}$ along $V_{w_1,w_2} \iso V_{w_2,w_1}$. The resulting scheme is $\RSch{G}{x}/\RSch{B}{x}$.  

We have now defined $\RSch{G}{x}/\RSch{B}{x}$ and also $\RSch{b}{x} : \RSch{G}{x}\to \RSch{G}{x}/\RSch{B}{x}$. It is clear that $\RSch{b}{x}$ is a principal fibration with group $\RSch{B}{x}$. Since this fibration is given locally by $\RSch{b(w)}{x}$ --- which is defined by composing two isomorphisms and then projecting $\AA^{l(w_0)}_\Rq \times \RSch{B}{x}$ --- the fibration is smooth.

A smooth fibration $\RSch{p}{x} : \RSch{G}{x}\to \RSch{G}{x}/\RSch{P}{x}$ with group $\RSch{P}{x}$ is defined by similar arguments. 
From the construction above we see that the special fibre of $\RSch{G}{x} \to \RSch{G}{x}/\RSch{P}{x}$ is a cokernel of $\kSch{P}{x}\to \kSch{G}{x}$ in the category of algebraic varieties over $\kq$.
\end{proof}

\begin{figure}[htbp]
\begin{center}
\[
\xymatrix{
\Ksch{P} \ar[r] & \Ksch{G} \ar[r] & \Ksch{G}/\Ksch{P} & \\
\KSch{P}{x} \ar@{=}[u]\ar[r] \ar[d] & \KSch{G}{x} \ar[d]_{j_\RSch{G}{x}} \ar@{=}[u]\ar[r] & \KSch{G}{x}/\KSch{P}{x} \ar@{=}[u] \ar[d] \ar[r] & \Spec{\Kq} \ar[d] \\
\RSch{P}{x} \ar[r]  & \RSch{G}{x} \ar[r]  & \RSch{G}{x}/\RSch{P}{x} \ar[r] & \Spec{\Rq} \\
\kSch{P}{x} \ar[r] \ar[d]_{\nu_\RSch{G}{x}\vert_{\kSch{P}{x}}} \ar[u] & \kSch{G}{x} \ar[r] \ar[d]^{\nu_\RSch{G}{x}} \ar[u]^{i_\RSch{G}{x}} & \kSch{G}{x}/\kSch{P}{x} \ar[d] \ar[u] \ar[r] & \Spec{\kq} \ar[u] \\
\rk{G}{P}{x} \ar[r] & \rkSch{G}{x} \ar[r] & \rkSch{G}{x}/\rk{G}{P}{x} \ar[ru] & \\
}
\]
\caption{The quotient scheme $\RSch{G}{x}/\RSch{P}{x}$}
\label{diagramme: quotient}
\end{center}
\end{figure}

\begin{remark}
If $x_0$ is hyperspecial then $\RSch{G}{x_0}/\RSch{P}{x_0}$ is projective. We will not use that fact in this paper.
\end{remark}

\section{A (hyper)special case of the Mackey formula}

\begin{proposition}\label{proposition: cres ind}
Let $\Ksch{G}$ be a connected reductive linear algebraic group over a non-Achimedean local field $\Kq$ of odd or zero characteristic. 
Let $\Lq/\Kq$ be a finite unramified extension.
Let $\LBsch{P}$ be a parabolic subgroup of $\Ksch{G}\times_\Spec{\Kq} \Spec{\Lq}$ with reductive quotient $\LBsch{L}$. Suppose $x_0\in I(\Ksch{G},\Kq)$ is hyperspecial and that the image $x_0'$ of $x_0$ under $I(\Ksch{G},\Kq)\hookrightarrow I(\Ksch{G},\Lq)$ also lies in the image of $I(\Ksch{L},\Lq)\hookrightarrow I(\Ksch{G},\Lq)$. For every equivariant perverse sheaf $\fais{G}$ on $\KKsch{L}$,
\[
\cres{G}{x_0}\ \ind^\KKsch{G}_\KKsch{P} \fais{G}
\iso \ind^\kkSch{G}{x_0}_\kkSch{P}{x_0'} \ \cres{L}{x_0'} \fais{G},
\]
where $\RLBSch{P}{x_0'}$ is the smooth $\RLq$-scheme introduced in Proposition~\ref{proposition: more geometry}.
\end{proposition}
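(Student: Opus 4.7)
The plan is to express parabolic induction as a composition of smooth pullbacks and a proper pushforward through an integral version of the standard incidence variety, and then commute each of these functors past nearby cycles via the usual smooth and proper base-change isomorphisms. Recall that on equivariant perverse sheaves $\ind^\KKsch{G}_\KKsch{P}\fais{G}$ is computed (up to a shift and Tate twist) as $(\KKsch{q_1})_!\,(\KKsch{q_2})^*\,(\KKsch{\pi})^*\,\fais{G}$, where $\KKsch{\pi} : \KKsch{P}\to\KKsch{L}$ is the Levi quotient,
\[
\KKsch{\tilde G} \ceq \{ (g, h\KKsch{P}) \in \KKsch{G} \times (\KKsch{G}/\KKsch{P}) \tq h^{-1}gh \in \KKsch{P} \},
\]
$\KKsch{q_1}$ is the first projection (proper, because its fibres sit inside the projective variety $\KKsch{G}/\KKsch{P}$), and $\KKsch{q_2}$ sends $(g,h\KKsch{P})$ to $h^{-1}gh \in \KKsch{P}$ (read using the $\KKsch{P}$-equivariance of $\fais{G}$; this map is smooth, being pulled back from the principal $\KKsch{P}$-bundle $\KKsch{G}\to\KKsch{G}/\KKsch{P}$).

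The first step is to lift this entire diagram to $\Spec{\RLq}$. Proposition~\ref{proposition: geometry} supplies the smooth integral models $\RSch{P}{x_0'}$ and $\RSch{L}{x_0'}$ together with the smooth morphism $\RSch{\pi}{x_0'} : \RSch{P}{x_0'}\to\RSch{L}{x_0'}$, and Proposition~\ref{proposition: more geometry} supplies the smooth integral model $\RSch{G}{x_0}/\RSch{P}{x_0'}$ along with the smooth principal $\RSch{P}{x_0'}$-fibration $\RSch{G}{x_0}\to\RSch{G}{x_0}/\RSch{P}{x_0'}$. From these I form
\[
\RSch{\tilde G}{x_0} \ceq \{(g, h\RSch{P}{x_0'}) \in \RSch{G}{x_0}\times(\RSch{G}{x_0}/\RSch{P}{x_0'}) \tq h^{-1}gh\in\RSch{P}{x_0'}\},
\]
with projections $\RSch{q_1}{x_0}$ and $\RSch{q_2}{x_0}$ defined in the obvious way. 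Here $\RSch{q_1}{x_0}$ is proper, since its fibres lie inside $\RSch{G}{x_0}/\RSch{P}{x_0'}$, which is projective over $\RLq$ because $x_0$ is hyperspecial (Remark following Proposition~\ref{proposition: more geometry}); and $\RSch{q_2}{x_0}$ is smooth, being obtained by pullback from the principal fibration of Proposition~\ref{proposition: more geometry}.

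The next step is to extend scalars to $\RRq$ and apply nearby cycles. Smooth base change past $(\KKsch{\pi})^*$ and $(\KKsch{q_2})^*$, combined with proper base change past $(\KKsch{q_1})_!$, produces a canonical isomorphism
\[
\NC{\RRSch{G}{x_0}}\,(\KKsch{q_1})_!\,(\KKsch{q_2})^*\,(\KKsch{\pi})^*\,\fais{G}
\iso (\kkSch{q_1}{x_0})_!\,(\kkSch{q_2}{x_0})^*\,(\kkSch{\pi}{x_0'})^*\,\NC{\RRSch{L}{x_0'}}\,\fais{G}.
\]
Because $x_0$ is hyperspecial, $\nu_{\RSch{G}{x_0}}$ is the identity and $\cres{G}{x_0} = \NC{\RRSch{G}{x_0}}$, so the left-hand side equals $\cres{G}{x_0}\,\ind^\KKsch{G}_\KKsch{P}\,\fais{G}$. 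By construction, the right-hand side is $\ind^{\kkSch{G}{x_0}}_{\kkSch{P}{x_0'}}\NC{\RRSch{L}{x_0'}}\fais{G}$; since the induction functor on the special fibre factors through $\rkkSch{L}{x_0'}$, one final application of smooth base change along the smooth morphism $\kkSch{P}{x_0'}\to\rkkSch{L}{x_0'}$ (\ie along $\nu_{\RRSch{L}{x_0'}}\circ\kkSch{\pi}{x_0'}$) replaces $\NC{\RRSch{L}{x_0'}}\fais{G}$ by $\cres{L}{x_0'}\fais{G}$, yielding the desired formula.

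The main obstacle, which I expect to be largely bookkeeping, is the careful construction of the integral incidence scheme $\RSch{\tilde G}{x_0}$ and the verification that $\RSch{q_1}{x_0}$ is proper and $\RSch{q_2}{x_0}$ is smooth over $\RLq$. The properness of $\RSch{q_1}{x_0}$ is precisely where the hyperspecial hypothesis on $x_0$ enters: only then is $\RSch{G}{x_0}/\RSch{P}{x_0'}$ projective over $\RLq$, and without this the present argument would have to be replaced by a more delicate base-change statement of Braden type (\cf Lemma~\ref{lemma: Braden base change}).
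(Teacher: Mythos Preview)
Your proposal is correct and follows essentially the same approach as the paper: build integral models over $\RLq$ for the parabolic-induction diagram (the paper uses the $X_P,\,Y_P,\,\alpha,\,\beta$ presentation, which handles the well-definedness of your $q_2$ more cleanly via the $\RLBSch{P}{x_0'}$-torsor $\RLBSch{\beta}{x_0'}$), then commute nearby cycles past each map by proper base change for $\proj_1$ and smooth base change for $\alpha$ and $\beta$. The paper likewise asserts that $\proj_1$ is proper (``this is key!'') without further comment, so despite the Remark after Proposition~\ref{proposition: more geometry} this is indeed where the hyperspecial hypothesis enters; your extra final step is unnecessary, since $x_0'$ is hyperspecial for $L$ and hence $\cres{L}{x_0'}=\NC{\RRSch{L}{x_0'}}$.
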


\begin{proof}
The proof of Proposition~\ref{proposition: cres ind} follows the argument for \cite{cunningham-salmasian:sheaves}*{Theorem~3}, with small adaptations, which we include here.
We write $x_0'$ for the image of $x_0$ under $I(\Ksch{L},\Kq) \to I(\Ksch{L},\Lq)$ and under  $I(\Ksch{G},\Kq) \to I(\Ksch{G},\Lq)$.
Consider the $\RLq$-schemes
\begin{equation*}
\begin{aligned}
&\RLBSch{X}{x_0'} \ceq \left\{ (g,h) \in \RLBSch{G}{x_0'}\times\RLBSch{G}{x_0'} \tq h^{-1}gh\in \RLBSch{P}{x_0'}\right\} \iso \RLBSch{G}{x_0'}\times \RLBSch{P}{x_0'} \\
&\RLBSch{Y}{x_0'} \ceq\left\{ (g,h\RLBSch{P}{x_0'}) \in \RLBSch{G}{x_0'}\times\left(\RLBSch{G}{x_0'}/\RLBSch{P}{x_0'}\right) \tq h^{-1}gh\in \RLBSch{P}{x_0'}\right\}.
\end{aligned}
\end{equation*}
By Proposition~\ref{proposition: more geometry}, these are smooth schemes and the morphism $\RLBSch{\beta}{x_0'} : \RLBSch{X}{x_0'} \to  \RLBSch{Y}{x_0'}$ defined by $\RLBSch{\beta}{x_0'}(g,h) \ceq (g,h\RLBSch{P}{x_0'})$ is a $\RLBSch{P}{x_0'}$-torsor.  It also follows from Proposition~\ref{proposition: more geometry} (with the field $\Kq$ replaced by $\Lq$) that the generic fibres of $\RLBSch{X}{x_0'}$ and $\RLBSch{Y}{x_0'}$ are the classical varieties
\begin{equation*}
\begin{aligned}
\LBSch{X}{x_0'} \iso & X_P \ceq \left\{ (g,h) \in \Lsch{G}\times \Lsch{G} \tq h^{-1}gh\in P\right\} \iso \Lsch{G}\times P \\ 
\LBSch{Y}{x_0'} \iso & Y_P \ceq\left\{ (g,hP) \in \Lsch{G} \times\left(\Lsch{G}/P\right) \tq h^{-1}gh\in P\right\} 
\end{aligned}
\end{equation*}
The generic fibre $\LBSch{\beta}{x_0'}$ of $\RLBSch{\beta}{x_0'}$ is the smooth principal $P$- fibration $\beta_P : X_P \to Y_P$ defined by $\beta_P(g,h) \ceq (g, hP)$.
Using Proposition~\ref{proposition: more geometry} we find that the special fibres $\lBSch{X}{x_0'}$ and $\lBSch{Y}{x_0'}$ are 
\begin{equation*}
\begin{aligned}
&\lBSch{X}{x_0'} \ceq \left\{ (g,h) \in \lBSch{G}{x_0'}\times\lBSch{G}{x_0'} \tq h^{-1}gh\in \lBSch{P}{x_0'}\right\} \iso \lBSch{G}{x_0'}\times \lBSch{P}{x_0'} \\
&\lBSch{Y}{x_0'} \ceq\left\{ (g,h\lBSch{P}{x_0'}) \in \lBSch{G}{x_0'}\times\left(\lBSch{G}{x_0'}/\lBSch{P}{x_0'}\right) \tq h^{-1}gh\in \lBSch{P}{x_0'}\right\} 
\end{aligned}
\end{equation*}
and that the special fibre $\lBSch{\beta}{x_0'}$ of $\RLBSch{\beta}{x_0'}$ is the smooth principal fibration $\beta_\lBSch{P}{x_0'} : X_\lBSch{P}{x_0'} \to Y_\lBSch{P}{x_0'}$ defined by $\beta_\lBSch{P}{x_0'}(g,h) \ceq (g, h\lBSch{P}{x_0'})$.

Again, with reference to Proposition~\ref{proposition: more geometry}, let $\RLBSch{\pi}{x_0'} : \RLBSch{P}{x_0'} \to \RLBSch{L}{x_0'}$ be the extension of the reductive quotient map $\LBsch{\pi_P} : \LBsch{P} \to \Lsch{L}$ (existence and uniqueness is given by the Extension Principle, as in \cite{yu:models}*{2.3}, for example) and define $\RLBSch{\alpha}{x_0'} : \RLBSch{X}{x_0'} \to \RLBSch{L}{x_0'}$ by $\RLBSch{\alpha}{x_0'}(h,p) = \RLBSch{\pi}{x_0'}(h^{-1}gh)$. We remark that $\RLBSch{\alpha}{x_0'}$  is smooth. The generic fibre of $\RLBSch{\alpha}{x_0'}$ is $\alpha_P(g,h) = \pi_P(h^{-1}gh)$; the special fibre of $\RLBSch{\alpha}{x_0'}$ is defined likewise by $\lBSch{\alpha}{x_0'}(g,h) = \lBSch{\pi}{x_0'}(h^{-1}gh)$.

\begin{figure}[htbp]
\begin{center}
\[
	\xymatrix{
	\Lsch{G} \ar@{=}[r] & \LBSch{G}{x_0'} \ar[r]^{j_{\RLBSch{G}{x_0'}}}& \RLBSch{G}{x_0'} & \ar[l]_{i_{\RLBSch{G}{x_0'}}} \lBSch{G}{x_0'} \ar@{=}[r] & \lBSch{G}{x_0'} \\
	\ar[u]_{\proj_1} \sch{Y}_{\LBsch{P}} \ar@{=}[r] & \ar[u]_{\proj_1} \LBSch{Y}{x_0'} \ar[r]^{j_{\RLBSch{Y}{x_0'}}}& \ar[u]_{\proj_1} \RLBSch{Y}{x_0'}  & \ar[l]_{i_{\RLBSch{Y}{x_0'}}} \ar[u]_{\proj_1} \lBSch{Y}{x_0'} \ar@{=}[r] & \ar[u]_{\proj_1} \sch{Y}_{\lBSch{P}{x_0'}} \\
	\ar[d]^{\alpha_{\LBsch{P}}} \ar[u]_{\beta_{\LBsch{P}}} \sch{X}_{\LBsch{P}} \ar@{=}[r] & \ar[u]_{\LBSch{\beta}{x_0'}} \ar[d]^{\KSch{\alpha}{x_0}} \LBSch{X}{x_0'} \ar[r]^{j_{\RLBSch{X}{x_0'}}} & \ar[u]_{\RLBSch{\beta}{x_0'}} \ar[d]^{\RLBSch{\alpha}{x_0'}} \RLBSch{X}{x_0'} & \ar[l]_{i_{\RLBSch{X}{x_0'}}} \ar[u]_{\lBSch{\beta}{x_0'}} \ar[d]^{\lBSch{\alpha}{x_0'}} \lBSch{X}{x_0'} \ar@{=}[r]  & \ar[d]^{\alpha_{\lBSch{P}{x_0'}}} \ar[u]_{\beta_{\lBSch{P}{x_0'}}} \sch{X}_{\lBSch{P}{x_0'}} \\
			 \Lsch{L} \ar@{=}[r] & \LBSch{L}{x_0'} \ar[r]^{j_{\RLBSch{L}{x_0'}}} \ar[d] & \RLBSch{L}{x_0'} \ar[d] &\ar[l]_{i_{\RLBSch{L}{x_0'}}} \lBSch{L}{x_0'} \ar@{=}[r] \ar[d] & \lBSch{L}{x_0'}\\
	& \Spec{\Lq} \ar[r] & \Spec{\RLq} & \ar[l] \Spec{\lq} & \\
	}
\]
\caption{Parabolic induction and compact restriction}
\label{diagramme: ind}
\end{center}
\end{figure}

Consider Figure~\ref{diagramme: ind}, which consists entirely of cartesian squares.
Let $\fais{G}$ be a character sheaf of $\KKsch{L}$. Using the definition of parabolic induction \cite{lusztig:character-sheaves-I}*{4} and notation from \cite{cunningham-salmasian:sheaves}*{1.5.1}, we have
\begin{eqnarray*}
\cres{G}{x_0}\ \ind^\KKsch{G}_\KKsch{P} \fais{G} 
=
\NC{\RRSch{G}{x_0'}}\ {(\proj_1)}_!\ (\beta_\KKsch{P})_\# \ {(\alpha_\KKsch{P})}^*\ \fais{G}.
\end{eqnarray*}
Since the generic fibre of $\RLBSch{\alpha}{x_0'}$ is $\alpha_\LBsch{P}$ and $\RLBSch{\beta}{x_0'} = \beta_\LBsch{P}$, it follows that
	\begin{equation*}
 \NC{\RRSch{G}{x_0}}\ {(\proj_1)}_!\ (\beta_\KKsch{P})_\# \ {(\alpha_\KKsch{P})}^*\ \fais{G}
=
\NC{\RRSch{G}{x_0}}\ {(\proj_1)}_!\ (\LLBSch{\beta}{x_0'})_\#\ {\LLBSch{\alpha}{x_0'}}^*\ \fais{G}.
	\end{equation*}
The projection $\proj_1 : \RLBSch{Y}{x_0'} \to \RLBSch{G}{x_0'}$ is proper -- this is key! By proper base change, there is a  natural isomorphism
	\begin{equation*}
\NC{\RRSch{G}{x_0}}\ {(\proj_1)}_!\ (\LLBSch{\beta}{x_0'})_\#\ {(\LLBSch{\alpha}{x_0'})}^*\ \fais{G}
\iso  
{(\proj_1)}_!\ \NC{\RRLBSch{Y}{x_0'}}\ (\LLBSch{\beta}{x_0'})_\#\ {\LLBSch{\alpha}{x_0'}}^*\ \fais{G}.
	\end{equation*}
As explained in \cite{cunningham-salmasian:sheaves}*{\S1.4.5}, smooth base change provides a natural isomorphism
	\begin{equation*}
 {(\proj_1)}_!\ 
\NC{\RRLBSch{Y}{x_0'}}\ (\LLBSch{\beta}{x_0'})_\#\ {(\LLBSch{\alpha}{x_0'})}^*\ \fais{G}\iso {(\proj_1)}_!\ (\llBSch{\beta}{x_0'})_\#\ \NC{\RRLBSch{X}{x_0'}}\ {\LLBSch{\alpha}{x_0'}}^*\ \fais{G}.
	\end{equation*}
Recall that $\lBSch{\beta}{x_0'} =  \beta_{\lBSch{P}{x_0'}}$ and $\llBSch{\alpha}{x_0'} =  \alpha_\llBSch{P}{x_0'}$. Use smooth base change one more time:
	\begin{equation*}
 {(\proj_1)}_!\ 
(\llBSch{\beta}{x_0'})_\#\ \NC{\RRLBSch{X}{x_0'}}\ {\LLBSch{\alpha}{x_0'}}^*\ \fais{G}
 \iso  {(\proj_1)}_!\ (\beta_{\llBSch{P}{x_0'}})_\# \  {(\alpha_\llBSch{P}{x_0'})}^*\ 
 \NC{\RRSch{L}{x_0'}}\ \fais{G}.
	\end{equation*}
To finish, we need only recall the definition of induction (again) and compact restriction for the hyperspecial model $\RLBSch{L}{x_0'}$:
\begin{equation*}
{(\proj_1)}_!\ (\beta_{\llBSch{P}{x_0'}})_\# \  {(\alpha_\llBSch{P}{x_0'})}^*\ 
 \NC{\RRSch{L}{x_0'}}\ \fais{G}
= \ind^\llBSch{G}{x_0}_\llBSch{P}{x_0'} \ \cres{L}{x_0'} \fais{G}.
	\end{equation*}
\end{proof}

\section{Nearby cycles of character sheaves}

\begin{proposition}\label{proposition: BIRS}
Suppose $\Ksch{G}$ is a connected, reductive linear algebraic group over non-Archimedean local field $\Kq$ that satisfies hypotheses H.1 and H.2. Let $\RSch{G}{x_0}$ be a hyperspecial integral model for $G$. If $\fais{F}$ is a character sheaf of $\KKsch{G}$ then $\cres{G}{x_0}\fais{F}$ is a direct sum of character sheaves and thus a semisimple perverse sheaf of geometric origin.
\end{proposition}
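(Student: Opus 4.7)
The plan is to realize $\fais{F}$ as a direct summand of a parabolically induced complex and reduce to Propositions~\ref{proposition: cres ind} and \ref{proposition: semisimple}. By Lusztig's classification of character sheaves, there exist a parabolic $\KKsch{P}' \subseteq \KKsch{G}$ with reductive quotient $\KKsch{L}'$ and a cuspidal character sheaf $\fais{F}'$ of $\KKsch{L}'$ such that $\fais{F}$ is a direct summand of $\ind^{\KKsch{G}}_{\KKsch{P}'}\fais{F}'$. Since $\cres{G}{x_0}$ is an additive functor and the category of perverse sheaves satisfies Krull--Schmidt, it suffices to show that $\cres{G}{x_0}\ind^{\KKsch{G}}_{\KKsch{P}'}\fais{F}'$ is a direct sum of character sheaves; the summand $\cres{G}{x_0}\fais{F}$ will then inherit the same property.

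By Hypothesis H.2 we may choose a finite unramified extension $\Lq/\Kq$, a parabolic $\Ksch{P}\subseteq \Ksch{G}\times_\Spec{\Kq}\Spec{\Lq}$ with reductive quotient $\Ksch{L}$, and $g\in \Ksch{G}(\tKq)$ such that $\conj{g^{-1}}{\KKsch{P}'}=\Ksch{P}\times_\Spec{\Lq}\Spec{\KKq}$. Inner conjugation by $g$ carries $\fais{F}'$ to a cuspidal character sheaf $\fais{G}\ceq \conj{g^{-1}}{\fais{F}'}$ of $\KKsch{L}$; since $\ind^{\KKsch{G}}_{\KKsch{P}'}\fais{F}'$ is $\KKsch{G}$-equivariant, this conjugation yields $\ind^{\KKsch{G}}_{\KKsch{P}'}\fais{F}' \iso \ind^{\KKsch{G}}_{\KKsch{P}}\fais{G}$. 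The image $x_0'$ of $x_0$ in $I(\Ksch{G},\Lq)$ is hyperspecial because $\Lq/\Kq$ is unramified. A further $\Ksch{G}(\Lq)$-conjugation---which again does not alter the isomorphism class of the induced sheaf, by equivariance---positions $x_0'$ inside the subbuilding $I(\Ksch{L},\Lq)\hookrightarrow I(\Ksch{G},\Lq)$ and keeps it hyperspecial as a vertex of $I(\Ksch{L},\Lq)$. This is the main technical step in the argument and rests on the compatibility of hyperspecial vertices with Levi subbuildings in Bruhat--Tits theory.

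With these normalisations in place, Proposition~\ref{proposition: cres ind} (applied with $\Kq$ replaced by $\Lq$) gives
\[
\cres{G}{x_0}\ind^{\KKsch{G}}_{\KKsch{P}}\fais{G}\iso \ind^{\kkSch{G}{x_0}}_{\kkSch{P}{x_0'}}\cres{L}{x_0'}\fais{G}.
\]
Since $x_0'$ is hyperspecial in $I(\Ksch{L},\Lq)$ and $\Lq$ inherits hypothesis H.1 from $\Kq$, Proposition~\ref{proposition: semisimple} presents $\cres{L}{x_0'}\fais{G}$ as a direct sum of cuspidal character sheaves on $\kkSch{L}{x_0'}$. By Lusztig's definition, parabolic induction of cuspidal character sheaves on a connected reductive group over $\kkq$ produces a semisimple perverse sheaf of geometric origin whose simple constituents are character sheaves. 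Therefore $\cres{G}{x_0}\ind^{\KKsch{G}}_{\KKsch{P}}\fais{G}$ is a direct sum of character sheaves on $\kkSch{G}{x_0}$, and the conclusion for $\cres{G}{x_0}\fais{F}$ follows by extracting the appropriate direct summand.
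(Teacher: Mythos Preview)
Your argument is correct and follows essentially the same route as the paper: realize $\fais{F}$ as a direct summand of $\ind^{\KKsch{G}}_{\KKsch{P}}\fais{G}$ for a cuspidal $\fais{G}$, descend $\KKsch{P}$ to a finite unramified extension via H.2, arrange that $x_0'$ lies in $I(\Ksch{L},\Lq)$, apply Proposition~\ref{proposition: cres ind} and then Proposition~\ref{proposition: semisimple}, and finish with Lusztig's semisimplicity of parabolic induction. The only notable difference is in the ``positioning'' step: where you invoke a $\Ksch{G}(\Lq)$-conjugation and general Bruhat--Tits compatibilities to place $x_0'$ in $I(\Ksch{L},\Lq)$, the paper instead cites \cite{cunningham-salmasian:sheaves}*{Theorem~2} for this reduction; your version is morally the same but would benefit from that citation (or a precise statement of the needed fact) rather than the phrase ``main technical step,'' since it is not obvious in general that a given hyperspecial vertex can be moved into a prescribed Levi subbuilding by a rational conjugation.
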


\begin{proof}
Let $\fais{F}$ be an arbitrary character sheaf of $\KKsch{G}$. By  \cite{lusztig:character-sheaves-I}*{Theorem~4.4~(a)} (or \cite{mars-springer:character-sheaves}*{Corollary~9.3.5}) there is a parabolic subgroup $\KKsch{P}$ with Levi subgroup $\KKsch{L}$ and a cuspidal character sheaf $\fais{G}$ of $\KKsch{L}$ such that 
\begin{equation}\label{eqn: BIRS 1}
\ind_\KKsch{P}^\KKsch{G} \fais{G} = \mathop{\oplus}\limits_i \fais{F}_i \oplus \fais{F},
\end{equation}
where each $\fais{F}_i$ is a character sheaf of $\KKsch{G}$, and thus a simple perverse sheaf. Thus,
\begin{equation}\label{eqn: BIRS 2}
\cres{G}{x_0}\ \ind_\KKsch{P}^\KKsch{G} \fais{G} = \mathop{\oplus}\limits_i \cres{G}{x_0}\fais{F}_i \oplus \cres{G}{x_0}\fais{F}
\end{equation}
By hypothesis H.2, and using \cite{cunningham-salmasian:sheaves}*{Theorem~2} if necessary, we may assume $\KKsch{P} = \LBsch{P}\times_\Spec{\Lq} \Spec{\KKq}$ where $\Lq/\Kq$ is finite unramified, and that the image $x_0'$ of $x_0$ under $I(\Ksch{G},\Lq) \hookrightarrow I(\Ksch{G},\Lq)$ also lies in the image of $I(\Ksch{L},\Lq) \hookrightarrow I(\Ksch{G},\Lq)$. The hypotheses to Proposition~\ref{proposition: cres ind} are now met, so \begin{equation}\label{eqn: BIRS 3}
\cres{G}{x_0}\ \ind^\KKsch{G}_\KKsch{P} \fais{G}\iso  \ind^\kkSch{G}{x_0}_\kkSch{P}{x_0'} \ \cres{L}{x_0'} \fais{G}.
\end{equation}
By Proposition~\ref{proposition: semisimple}, $\cres{L}{x_0'} \fais{G}$ is a direct sum of character sheaves of $\kkSch{L}{x_0'} \fais{G}$. By \cite{lusztig:character-sheaves-I}*{Proposition~4.8~(b)} and \eqref{eqn: BIRS 3}, $\ind^\kkSch{G}{x_0}_\kkSch{\hskip-1pt P'}{x_0} \ \cres{L}{x_0'} \fais{G}$ is a direct sum of character sheaves. Thus, $\cres{G}{x_0}\ \ind_\KKsch{P}^\KKsch{G} \fais{G}$, is a direct sum of character sheaves. It now follows from \eqref{eqn: BIRS 2} that the simple consituents of $\cres{G}{x_0}\fais{F}$ are character sheaves.
\end{proof}

\section{Main result}

\begin{theorem}\label{theorem: 3bis}
Let $\Ksch{G}$ be a connected reductive linear algebraic group over $\Kq$ satisfying hypotheses H.1 and H.2. 
Let $\Lq/\Kq$ be a finite unramified extension.
Let $\LBsch{P}$ be a parabolic subgroup of $\Ksch{G}\times_\Spec{\Kq} \Spec{\Lq}$ with reductive quotient $\Ksch{L}\times_\Spec{\Kq} \Spec{\Lq}$ (so $\Ksch{L}$ is a `twisted Levi subgroup' of $\Ksch{G}$).
Let $x$ be an element in $I(\Ksch{G},\Kq)$. If the star of $x\in I(G,\K)$ contains a hyperspecial vertex then there is a finite set $\mathcal{S}\subset G(\Lq)$ such that
\[
\cres{G}{x}\ind^\KKsch{G}_\KKsch{P}\ \fais{G}
\iso \mathop{\oplus}\limits_{g\in \mathcal{S}} \ind^\rkkSch{G}{x}_{\rkk{G}{\,^g\hskip-1pt P}{x'}}\ \,^g\hskip-1pt\left(\cres{L}{x'g} \fais{G}\right),
\]
for every character sheaf $\fais{G}$ on $\KKsch{L}$. The finite set $\mathcal{S}\subset \Ksch{G}(\Lq)$, the parabolic subgroups $\rkk{G}{\,^g\hskip-1pt P}{x'}$ of $\rkkSch{G}{x'}$, the integral model $\RSch{L}{x'g}$ appearing in $\cres{L}{x'g}$, and the meaning of $\,^g\hskip-1pt(\cres{L}{x'g} \fais{G})$, are all given in the proof.
\end{theorem}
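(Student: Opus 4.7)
The plan is to reduce the computation to the hyperspecial case via a vertex $x_0$ in the star of $x$, apply Proposition~\ref{proposition: cres ind} there, and then expand the result using the classical Mackey formula for parabolic induction and restriction of character sheaves on the finite reductive quotient $\kkSch{G}{x_0}$. Let $x_0$ be a hyperspecial vertex in the star of $x$ and write $x_0',x'$ for the images of $x_0,x$ under $I(\Ksch{G},\Kq)\hookrightarrow I(\Ksch{G},\Lq)$. Bruhat-Tits theory, via \cite{cunningham-salmasian:sheaves}*{Theorem~1} used exactly as at the start of the proof of Proposition~\ref{proposition: nc of ccs} (and underlying Remark~\ref{remark: semisimple}), produces a parabolic subgroup $\kksch{Q}\subseteq\kkSch{G}{x_0}$ with Levi quotient $\rkkSch{G}{x}$ and a canonical isomorphism
\[
\cres{G}{x}\fais{F}\iso \res^{\kkSch{G}{x_0}}_{\kksch{Q}}\,\cres{G}{x_0}\fais{F}
\]
for every equivariant perverse sheaf $\fais{F}$ on $\KKsch{G}$. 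Applied with $\fais{F}=\ind^{\KKsch{G}}_{\KKsch{P}}\fais{G}$ and combined with Proposition~\ref{proposition: BIRS}, this places the computation inside the semisimple abelian subcategory of character-sheaf-summands on $\kkSch{G}{x_0}$.

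Next, I compute $\cres{G}{x_0}\,\ind^{\KKsch{G}}_{\KKsch{P}}\fais{G}$ by Proposition~\ref{proposition: cres ind}. That proposition requires $x_0'$ to lie in the image of $I(\Ksch{L},\Lq)\hookrightarrow I(\Ksch{G},\Lq)$, a position which I arrange by invoking Hypothesis H.2 together with an Iwasawa-type decomposition $\Ksch{G}(\Lq)=\RSch{G}{x_0}(\RLq)\cdot\LBsch{P}(\Lq)$ and the conjugation invariance $\ind^{\KKsch{G}}_{\KKsch{P}}\fais{G}\iso \ind^{\KKsch{G}}_{{}^g\KKsch{P}}\,{}^g\fais{G}$: after choosing a representative $g\in\Ksch{G}(\Lq)$ the conjugate ${}^g\LBsch{P}$ has Levi ${}^g\Ksch{L}$ whose building contains $x_0'$, and Proposition~\ref{proposition: cres ind} then gives
\[
\cres{G}{x_0}\,\ind^{\KKsch{G}}_{{}^g\KKsch{P}}\,{}^g\fais{G}\iso \ind^{\kkSch{G}{x_0}}_{\kkSch{{}^gP}{x_0'}}\,\cres{{}^gL}{x_0'}\,{}^g\fais{G}.
\]
Transport of structure along the isomorphism $\Ksch{L}\to {}^g\Ksch{L}$ rewrites $\cres{{}^gL}{x_0'}\,{}^g\fais{G}$ as ${}^g(\cres{L}{y_g}\fais{G})$ for $y_g\ceq g^{-1}x_0'\in I(\Ksch{L},\Lq)$, and by Proposition~\ref{proposition: BIRS} this is a semisimple perverse sheaf of geometric origin.

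To finish, I apply the Mackey formula for parabolic induction and restriction of character sheaves on the finite reductive group $\kkSch{G}{x_0}$, as proved in \cite{lusztig:character-sheaves-V} (see also \cite{mars-springer:character-sheaves}), to expand $\res^{\kkSch{G}{x_0}}_{\kksch{Q}}\,\ind^{\kkSch{G}{x_0}}_{\kkSch{{}^gP}{x_0'}}\,{}^g(\cres{L}{y_g}\fais{G})$ as a direct sum indexed by $\kksch{Q}$-orbits on $\kkSch{G}{x_0}/\kkSch{{}^gP}{x_0'}$, each summand having the shape $\ind^{\rkkSch{G}{x}}_{(\text{parabolic of }\rkkSch{G}{x})}\,\res\,{}^h(\cdots)$ for a double-coset representative $h$. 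Lifting each $h$ compatibly to an element of $\Ksch{G}(\Lq)$ and merging with the outer choice of $g$ yields the finite set $\mathcal{S}\subset\Ksch{G}(\Lq)$ appearing in the statement. A second application of Proposition~\ref{proposition: res cres} identifies each inner parabolic restriction of $\cres{L}{y_g}\fais{G}$ with a further compact restriction $\cres{L}{x'g}\fais{G}$ attached to a point $x'g\in I(\Ksch{L},\Lq)$ and the corresponding parahoric integral model $\RSch{L}{x'g}$, and identifies $\rkk{G}{{}^gP}{x'}=\nu_{\RSch{G}{x'}}(\kkSch{{}^gP}{x'})$ as the correct parabolic of $\rkkSch{G}{x'}=\rkkSch{G}{x}$. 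I expect the main obstacle to lie precisely in this coordination: the finite Mackey double cosets on the residue quotient must be lifted and merged with the Iwasawa-type double cosets on $\Ksch{G}(\Lq)$ into a single set $\mathcal{S}\subset\Ksch{G}(\Lq)$ that simultaneously witnesses the parabolic conjugates ${}^g\LBsch{P}$, the Levi-building points $x'g$, the integral models $\RSch{L}{x'g}$, and the parabolics $\rkk{G}{{}^gP}{x'}$ appearing in the formula.
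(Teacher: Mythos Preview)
Your outline matches the paper's proof almost exactly: reduce via \cite{cunningham-salmasian:sheaves}*{Theorem~1} to the hyperspecial vertex $x_0$, apply Proposition~\ref{proposition: cres ind} there, invoke the Mackey formula for character sheaves on $\kkSch{G}{x_0}$ (the paper cites \cite{lusztig:character-sheaves-III}*{Proposition~15.2}), and lift the double-coset representatives to $\RSch{G}{x_0}(\RLq)\subset \Ksch{G}(\Lq)$. The only discrepancies are bookkeeping: the paper uses \cite{cunningham-salmasian:sheaves}*{Theorem~2} to place $x_0$ in $I(\Ksch{L},\Kq)$ at the outset, which eliminates your ``outer $g$'' and the merging step entirely; and the final identification of each inner restriction $\res^{\kkSch{L}{x_0}}_{\kkSch{L}{x_0}\cap(\rkkPSch{P}{x_0\leq x})^a}\cres{L}{x_0}\fais{G}$ with $\cres{L}{x'g}\fais{G}$ is a second application of \cite{cunningham-salmasian:sheaves}*{Theorem~1} (now with $\Ksch{L}$ in the role of $\Ksch{G}$), not of Proposition~\ref{proposition: res cres}.
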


\begin{proof}
Let $x_0$ be a hyperspecial vertex in the star of $x$; then $x_0\leq x$. Using \cite{cunningham-salmasian:sheaves}*{Theorem~2}, we may assume $x_0\in I(\Ksch{L},\Kq)\hookrightarrow I(\Ksch{G},\Kq)$. By \cite{cunningham-salmasian:sheaves}*{Theorem~1}, there is a parabolic subgroup $\rkPSch{P}{x_0\leq x}$ of $\rkSch{G}{x_0} = \kSch{G}{x_0}$ with Levi component $\rkSch{G}{x}$ such that
\begin{equation}\label{equation: 3bis 1}
\cres{G}{x}\ind^\KKsch{G}_\KKsch{P}\ \fais{G}
\iso 
\res^\rkkSch{G}{x_0}_\rkkPSch{P}{x_0\leq x}\ \cres{G}{x_0}\ind^\KKsch{G}_\KKsch{P}\ \fais{G}.
\end{equation}
(The notation $\rkPSch{P}{x_0\leq x}$ is potentially confusing in the present context: the subgroup $\rkPSch{P}{x_0\leq x} \subseteq \rkSch{G}{x_0}$ is determined by $x_0$ and $x$ in $I(\Ksch{G},\Kq)$ and is unrelated to the subgroup $\LBsch{P} \subset \Lsch{G}$.)
Since $x_0$ is hyperspecial, it follows from Proposition~\ref{proposition: cres ind} that
\begin{equation}\label{equation: 3bis 2}
\cres{G}{x_0}\ind^\KKsch{G}_\KKsch{P}\ \fais{G}
\iso \ind^\kkSch{G}{x_0}_\kkSch{P}{x_0'} \ \cres{L}{x_0} \fais{G},
\end{equation}
where $x_0'$ is the image of $x_0$ under $I(\Ksch{G},\Kq) \hookrightarrow I(\Ksch{G},\Lq)$. 
(Note that we have replaced $\cres{L}{x_0'} \fais{G}$, as it appears in Proposition~\ref{proposition: cres ind}, with $\cres{L}{x_0} \fais{G}$ since $x_0\in I(\Ksch{L},\Kq)$.) Combining \eqref{equation: 3bis 1} and \eqref{equation: 3bis 2} gives
\begin{equation}\label{equation: 3bis 3}
\cres{G}{x}\ind^\KKsch{G}_\KKsch{P}\ \fais{G}
\iso \res^\rkkSch{G}{x_0}_\rkkPSch{P}{x_0\leq x}
 \ind^\kkSch{G}{x_0}_\kkSch{P}{x_0'} \ \cres{L}{x_0} \fais{G}.
 \end{equation}
By Proposition~\ref{proposition: BIRS} (which requires Hypothesis H.2) the perverse sheaf $\cres{L}{x_0} \fais{G}$ is a direct sum of character sheaves. Therefore, by the Mackey formula for character sheaves \cite{lusztig:character-sheaves-III}*{Proposition~15.2},
\begin{equation}\label{equation: 3bis 4}
\begin{aligned}
&\res^\kkSch{G}{x_0}_\rkkPSch{P}{x_0\leq x}
 \ind^\kkSch{G}{x_0}_\kkSch{P}{x_0'} \ \cres{L}{x_0} \fais{G}\\
& \iso
 \mathop{\oplus}\limits_{a\in \mathcal{S}(\rkkPSch{P}{x_0\leq x}, \lBSch{P}{x_0'})}
 \ind^\rkkSch{G}{x}_{\rkkSch{G}{x} \cap (\,^a\kkSch{P}{x_0'})}
\,^a\left(\res^\kkSch{L}{x_0}_{\kkSch{L}{x_0}\cap \sch{(\rkkPSch{P}{x_0\leq x})^a}}
 \cres{L}{x_0} \fais{G} \right)
 \end{aligned}
\end{equation}
where $\mathcal{S}(\rkkPSch{P}{x_0\leq x}, \lBSch{P}{x_0'})$ is a set of representatives $a\in \kkSch{G}{x_0}$ for double cosets 
\begin{equation}\label{equation: 3bis 5}
\rkkPSch{P}{x_0\leq x}(\kkq)\backslash \kkSch{G}{x_0}(\kkq) / \kkSch{P}{x_0'}(\kkq)
\end{equation}
such that $\kkSch{L}{x_0}$ and all $(\kkSch{P}{x_0'})^a \ceq a^{-1} \kkSch{P}{x_0'} a$ contain a common maximal torus of $\kkSch{G}{x_0}$ (not depending on $a$).

As explained in the proof of \cite{cunningham-salmasian:sheaves}*{Lemma~2}, $\rkkPSch{P}{x_0\leq x}$ is defined over $\kq$; in fact, $\rkkPSch{P}{x_0\leq x} = \rkPSch{P}{x_0\leq x}\times_\Spec{\kq} \Spec{\kkq}$ where $\rkPSch{P}{x_0\leq x}$ is defined in\cite{cunningham-salmasian:sheaves}*{Lemma~2}. Together with the fact that $\KKsch{P}$ is defined over $\Lq$ (by hypothesis), it follows  (as in \cite{digne-michel:lie-type}*{Lemma~5.6~(ii)}) that the double coset space above actually coincides with 
\begin{equation}\label{equation: 3bis 6}
\rkPSch{P}{x_0\leq x}(\lq)\backslash \kSch{G}{x_0}(\lq) / \lBSch{P}{x_0'}(\lq).
\end{equation}
 The surjective group homomorphism $\RSch{G}{x_0}(\RLq) \to \kSch{G}{x_0}(\lq)$ induces a bijection 
\begin{equation}\label{equation: 3bis 7}
\RSch{G}{x}(\RLq) \backslash \RSch{G}{x_0}(\RLq) / \RSch{P}{x_0'}(\RLq)
\to
\rkPSch{P}{x_0\leq x}(\lq)\backslash \kSch{G}{x_0}(\lq) / \lBSch{P}{x_0'}(\lq).
\end{equation}
We will use this bijection to replace the summation set appearing in \eqref{equation: 3bis 4} with a subset of $\Ksch{G}(\Lq)$ and to re-write the summands of  \eqref{equation: 3bis 4} in the form promised by Theorem~\ref{theorem: 3bis}. Let $x'$ be the image of $x$ under $I(\Ksch{G},\Kq)\hookrightarrow I(\Ksch{G},\Lq)$. 
For each $a\in \mathcal{S}(\rkPSch{P}{x_0\leq x}, \lBSch{P}{x_0})$ there is some $g\in \RSch{G}{x_0}(\RLq)$ such that: 
 \begin{itemize}
 \item[(i)]
 the image of $g$ under the surjective group homomorphism $\RSch{G}{x_0}(\RLq) \to \kSch{G}{x_0}(\lq)$ is $a$; 
 \item[(ii)]
  the reductive quotient $\rkSch{L}{x'g}$ of the special fibre of the schematic closure $\RSch{L}{x'g}$ of $\Ksch{L}$ in $\RSch{G}{x'g} \ceq \RSch{G}{g^{-1}x}$ is ${\kSch{L}{x_0}\cap (\rkPSch{P}{x_0\leq x})^a}$; and
  \item[(iii)]
  the image $\rk{G}{\,^g\hskip-1pt P}{x'}$ 
  of the special fibre of $\RSch{\,^g\hskip-1pt P}{x'}$ 
   under the map \[\nu_\RSch{G}{x'} : \kSch{G}{x'} \to \rkSch{G}{x'}\] is the Levi component of the parabolic subgroup ${\rkSch{G}{x'} \cap (\,^a\kSch{P}{x_0'})}$.
  \end{itemize}
Let $\mathcal{S}$ be a set of elements $g$ so chosen. 
  %
The double coset of $g\in \mathcal{S}$ is uniquely determined by the corresponding $a\in \mathcal{S}(\rkPSch{P}{x_0\leq x}, \lBSch{P}{x_0'})$. 

We now use \cite{cunningham-salmasian:sheaves}*{Theorem~1} to re-write
\begin{equation}\label{equation: 3bis 8}
\res^\kkSch{L}{x_0}_{\kkSch{L}{x_0}\cap \rkkPSch{P}{x_0\leq x}^a}
 \cres{L}{x_0} \fais{G} 
 =
 \cres{L}{x'g} \fais{G}.
 \end{equation}
 Because of the relationship between $g$ and $a$ articulated above, we also have  
\begin{equation}
\,^a (\cres{L}{x'g} \fais{G}) = \,^g (\cres{L}{x'g} \fais{G}) = (\kkSch{m(g^{-1})}{x})^* \cres{L}{x'g} \fais{G},
\end{equation}
where $\kkSch{m(g^{-1})}{x}$ is defined in \cite{cunningham-salmasian:sheaves}*{\S 2.3}, and
\begin{equation}
\ind^\rkkSch{G}{x}_{\rkkSch{G}{x} \cap (\,^a\kkSch{P}{x_0'})}
=
\ind^\rkkSch{G}{x}_{\rkk{G}{\,^g\hskip-1pt P}{x'}}.
\end{equation}
(Observe that 
Therefore, $\rkkSch{G}{x} = \rkkSch{G}{x'}$ because $\Lq/\Kq$ is unramified.)
Therefore,
\begin{equation}
\begin{aligned}
&\hskip-5pt
 \mathop{\oplus}\limits_{a\in \mathcal{S}(\rkPSch{P}{x_0\leq x}, \lBSch{P}{x_0'})}
 \ind^\rkkSch{G}{x}_{\rkkSch{G}{x} \cap (\,^a\kkSch{P}{x_0'})}
\,^a\left(\res^\kkSch{L}{x_0}_{\kkSch{L}{x_0}\cap(\rkkPSch{P}{x_0\leq x})^a}
 \cres{L}{x_0} \fais{G} \right)\\
&=
 \mathop{\oplus}\limits_{g\in\mathcal{S}} \ind^\rkkSch{G}{x}_{\rkk{G}{\,^g\hskip-1pt P}{x'}}\ \,^g\hskip-2pt \left(\cres{L}{x'g} \fais{G}\right),
 \end{aligned}
 \end{equation}
 thus completing the proof of Theorem~\ref{theorem: 3bis}.
\end{proof}

\begin{remark}\label{remark: H.0}
It was not necessary to impose Hypothesis H.0 on $\Ksch{G}$ at the beginning of the statement of Theorem~\ref{theorem: 3bis} because later we insisted that the star of $x$ contain a hyperspecial vertex, which has the effect of making Hypothesis H.0 true for $\Ksch{G}$. This is also the reason Hypothesis H.0 does not appear explicitly in Corollary~\ref{corollary: lusztig induction}.
\end{remark}

\begin{corollary}\label{corollary: lusztig induction}
Let $\Ksch{G}$ be a connected reductive linear algebraic group over $\Kq$ satisfying Hypotheses H.1 and H.2. 
Let $T\subset G$ be a maximal torus that splits over a tamely ramified extension $\Lq/\Kq$. Suppose $x\in I(\Ksch{G},\Kq)$. If the star of $x$ contains a hyperspecial vertex then there is a finite set $\mathcal{S} \subset G(\Lq)$ such that
\[
\cres{G}{x}\bar{K}^\mathcal{L}_e
\iso \mathop{\oplus}\limits_{g\in \mathcal{S}} \bar{K}_e^{\,^g\hskip-2pt(\NC{\RSch{T}{x'g}}\fais{L})}[\dim \rkSch{G}{x}- \dim\KKsch{G}/\KKsch{T}],
\]
for every Kummer local system $\fais{L}$ on $T_\KK$, where $\bar{K}^\fais{L}_e$ is the complex defined in \cite{lusztig:character-sheaves-III}*{\S 12.1} (and likewise, $\bar{K}_e^{\,^g\hskip-2pt(\NC{\RSch{T}{x'g}}\fais{L})}$).
\end{corollary}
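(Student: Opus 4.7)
The plan is to specialize Theorem~\ref{theorem: 3bis} to the case where the parabolic subgroup is a Borel. Since $\Ksch{T}$ splits over the tamely ramified extension $\Lq/\Kq$, after enlarging $\Lq$ (still tamely ramified) and invoking Hypothesis~H.2 together with \cite{cunningham-salmasian:sheaves}*{Theorem~2} to replace $\Lq$ by a suitable finite unramified extension, there is a Borel subgroup $\LBsch{B}\subseteq \Ksch{G}\times_\Spec{\Kq}\Spec{\Lq}$ with Levi component $\Ksch{T}\times_\Spec{\Kq}\Spec{\Lq}$. Any Kummer local system $\fais{L}$ on $\Ksch{T}_\KK$ is a (cuspidal) character sheaf of $\KKsch{T}$, so Theorem~\ref{theorem: 3bis} applies and yields
\[
\cres{G}{x}\,\ind^\KKsch{G}_\KKsch{B}\fais{L}
\iso \mathop{\oplus}\limits_{g\in \mathcal{S}} \ind^\rkkSch{G}{x}_{\rkk{G}{\,^g\hskip-1pt B}{x'}}\ \,^g\hskip-1pt\!\left(\cres{T}{x'g} \fais{L}\right)
\]
for a finite subset $\mathcal{S}\subset \Ksch{G}(\Lq)$ supplied by the theorem.

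Next I would translate both sides into Lusztig's $\bar{K}_e$ notation. By the construction of $\bar{K}^\fais{L}_e$ in \cite{lusztig:character-sheaves-III}*{\S 12.1}, the complex $\bar{K}^\fais{L}_e$ on $\KKsch{G}$ differs from $\ind^\KKsch{G}_\KKsch{B}\fais{L}$ only by a shift determined by $\dim \KKsch{G}/\KKsch{T}$ (and a corresponding Tate twist), and the analogous identification holds on the finite reductive group $\rkkSch{G}{x}$ relating $\bar{K}_e^{\,^g\hskip-2pt(\NC{\RSch{T}{x'g}}\fais{L})}$ to $\ind^\rkkSch{G}{x}_{\rkk{G}{\,^g\hskip-1pt B}{x'}}\,^g\hskip-1pt(\cres{T}{x'g}\fais{L})$. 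This last identification requires knowing that for the torus $\Ksch{T}$ the functor $\cres{T}{x'g}$ reduces to $\NC{\RSch{T}{x'g}}$ up to a Tate twist: since $\Ksch{T}$ is tamely ramified, the kernel of $\nu_{\RSch{T}{x'g}}\colon \kSch{T}{x'g}\to\rkSch{T}{x'g}$ is smooth unipotent, so $(\nu_{\RSch{T}{x'g}})_!$ acts as a shift plus Tate twist, which is precisely what the definition of $\cres{T}{x'g}$ compensates for.

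Assembling these identifications summand by summand, each term on the right-hand side becomes $\bar{K}_e^{\,^g\hskip-2pt(\NC{\RSch{T}{x'g}}\fais{L})}$, shifted by the discrepancy between the two normalizations. The shift $[\dim\rkSch{G}{x} - \dim\KKsch{G}/\KKsch{T}]$ in the statement then arises as the composite bookkeeping of three contributions: Lusztig's normalization of $\bar{K}_e^\fais{L}$ relative to $\ind^\KKsch{G}_\KKsch{B}\fais{L}$, the corresponding normalization on $\rkkSch{G}{x}$, and the dimension shift inherent in the definition of compact restriction.

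The main obstacle is exactly this bookkeeping: the perverse shift built into parabolic induction as defined in \cite{lusztig:character-sheaves-I}*{\S 4}, the Tate twist by $(\dim\nu_{\RSch{G}{x}}/2)$ in $\cres{G}{x}$, the shift implicit in the passage from $\NC{\RSch{T}{x'g}}\fais{L}$ on $\kSch{T}{x'g}$ to the corresponding local system on $\rkSch{T}{x'g}$, and Lusztig's normalizations of $\bar{K}_e$ on both $\KKsch{G}$ and $\rkkSch{G}{x}$ all have to combine to give precisely the shift displayed. Once this is settled, the conclusion is simply a rewriting of the formula given by Theorem~\ref{theorem: 3bis}.
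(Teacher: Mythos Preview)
Your approach is the paper's: specialize Theorem~\ref{theorem: 3bis} to a Borel with Levi $T$, then rewrite both sides in Lusztig's $\bar K_e$ notation and track shifts. The paper's proof is shorter on one point. Rather than arguing that the kernel of $\nu_{\RSch{T}{x'g}}$ is smooth unipotent and that $\cres{T}{x'g}$ compensates for it, the paper simply observes that the integral model $\RSch{T}{x'g}$ (as constructed in the proof of Theorem~\ref{theorem: 3bis}) is hyperspecial --- its special fibre is already a torus --- so $\nu_{\RSch{T}{x'g}}$ is an isomorphism and $\cres{T}{x'g} = \NC{\RSch{T}{x'g}}$ on the nose. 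This also gives $\dim\kkSch{T}{x'g} = \dim\KKsch{T}$, which is what makes the shift bookkeeping come out as $[\dim\rkSch{G}{x} - \dim\KKsch{G}/\KKsch{T}]$ with no further fuss. Your claim that the definition of $\cres{T}{x'g}$ ``compensates'' for a nontrivial unipotent kernel is not quite right as stated: the definition carries only a Tate twist $(\dim\nu/2)$, not a cohomological shift, so a genuinely nontrivial $\nu$ would leave an uncompensated degree shift. Fortunately this is moot here, for the reason just given.
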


\begin{proof}
Apply Theorem~\ref{theorem: 3bis} to the case when $\LBsch{P} = \LBsch{B}$ is a Borel subgroup of $\Ksch{G}\times_\Spec{\Kq} \Spec{\Lq}$ with Levi factor $\Ksch{T}\times_\Spec{\Kq} \Spec{\Lq}$. Use the fact that every character sheaf of $\KKsch{T}$ takes the form $\mathcal{L}[\dim T]$ for some Kummer local system $\fais{L}$ on $\KKsch{T}$ and $\bar{K}_e^\fais{L}[\dim G] = \ind_\KKsch{B}^\KKsch{G} \fais{L}[\dim T]$. Also use the fact that the smooth integral model $\RRSch{T}{x'g}$ (as defined in the proof of Theorem~\ref{theorem: 3bis}) is hyperspecial in the sense that $\kkSch{T}{x'g}$ is reductive, so $\cres{T}{x'g}{\fais{L}} = \NC{\RSch{T}{x'g}}\fais{L}$ 
 and $\dim\kkSch{T}{x'g} = \dim \KKsch{T}$ 
 for each $g\in \mathcal{S}$.
\end{proof}

\section{The full Mackey}

We believe Theorem~\ref{theorem: 3bis} is also true without the condition on $x\in I(\Ksch{G},\Kq)$ (that its star contains a hyperspecial vertex). Conjecture~\ref{conjecture: mackey}, below, is the topic of current work.

\begin{conjecture}[Mackey formula for compact restriction of character sheaves]  
\label{conjecture: mackey}
\ \newline Let $\Ksch{G}$ be a connected reductive linear algebraic group over $\Kq$ satisfying the Hypotheses H.1 and H.2. 
Let $\Lq/\Kq$ be a finite, tamely ramified extension.
Let $\LBsch{P}$ be a parabolic subgroup of $\Ksch{G}\times_\Spec{\Kq} \Spec{\Lq}$ with reductive quotient $\Ksch{L}\times_\Spec{\Kq} \Spec{\Lq}$ (so $\Ksch{L}$ is a `twisted Levi subgroup' of $\Ksch{G}$).
Let $x$ be an element in the Bruhat-Tits building $I(\Ksch{G},\Kq)$. 
There is a finite set $\mathcal{S}\subset G(\Lq)$ such that
\[
\cres{G}{x}\ind^\KKsch{G}_\KKsch{P}\ \fais{G}
\iso \mathop{\oplus}\limits_{g\in\mathcal{S}} \ind^\rkkSch{G}{x}_{\rkk{G}{\,^g\hskip-1pt P}{x'}}\ \,^g\hskip-2pt \left(\cres{L}{x'g} \fais{G}\right),
\]
for every character sheaf $\fais{G}$ on $\KKsch{L}$. 
The finite set $\mathcal{S}\subset \Ksch{G}(\Lq)$, the parabolic subgroups $\rkk{G}{\,^g\hskip-1pt P}{x'}$ of $\rkkSch{G}{x}$, and the integral model $\RSch{L}{x'g}$ appearing in $\cres{L}{x'g}$, are all as they appear in the proof of Theorem~\ref{theorem: 3bis}.
\end{conjecture}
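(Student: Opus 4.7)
The plan is to reduce the theorem to the hyperspecial case already handled by Proposition~\ref{proposition: cres ind}, and then apply the classical Mackey formula for character sheaves on a finite reductive group, using Proposition~\ref{proposition: BIRS} to guarantee that the intermediate sheaf is a direct sum of character sheaves.

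First I would choose a hyperspecial vertex $x_0$ in the star of $x$, and invoke \cite{cunningham-salmasian:sheaves}*{Theorem~2} to assume that $x_0\in I(\Ksch{L},\Kq)$ under the sub-building inclusion $I(\Ksch{L},\Kq)\hookrightarrow I(\Ksch{G},\Kq)$. The key reduction step is then the identity
\[
\cres{G}{x}\iso \res^\rkkSch{G}{x_0}_{\rkkPSch{P}{x_0\leq x}}\circ \cres{G}{x_0},
\]
coming from \cite{cunningham-salmasian:sheaves}*{Theorem~1}, which converts our problem into one about the finite reductive group $\rkkSch{G}{x_0}$. Applying Proposition~\ref{proposition: cres ind} to commute $\cres{G}{x_0}$ past $\ind_\KKsch{P}^\KKsch{G}$ (legitimate because $x_0$ is hyperspecial and because Hypothesis H.2 lets us take $\KKsch{P}$ to descend to an unramified extension $\Lq/\Kq$ with $x_0' \in I(\Ksch{L},\Lq)$), we arrive at
\[
\cres{G}{x}\ind^\KKsch{G}_\KKsch{P}\fais{G}\iso \res^\rkkSch{G}{x_0}_{\rkkPSch{P}{x_0\leq x}}\ \ind^\kkSch{G}{x_0}_{\kkSch{P}{x_0'}}\ \cres{L}{x_0}\fais{G}.
\]

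Second, I would invoke Proposition~\ref{proposition: BIRS} to conclude that $\cres{L}{x_0}\fais{G}$ is a direct sum of character sheaves on $\rkkSch{L}{x_0}$. This is what unlocks the classical Mackey formula for character sheaves, \cite{lusztig:character-sheaves-III}*{Proposition~15.2}, applied on the finite reductive group $\rkkSch{G}{x_0}$ to the parabolic $\rkkPSch{P}{x_0\leq x}$ and the induced parabolic from $\lBSch{P}{x_0'}$. This produces a direct sum indexed by a finite set $\mathcal{S}(\rkkPSch{P}{x_0\leq x},\kkSch{P}{x_0'})$ of double coset representatives in $\rkkSch{G}{x_0}$, whose summands involve induction to $\rkkSch{G}{x}$ of conjugates of parabolic restrictions of $\cres{L}{x_0}\fais{G}$.

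Third, I need to translate this finite-group answer back into data on $G(\Lq)$. Since $\rkkPSch{P}{x_0\leq x}$ is defined over $\kq$ and $\KKsch{P}$ over $\Lq$, the double cosets in question descend (by the argument of \cite{digne-michel:lie-type}*{Lemma~5.6~(ii)}) to $\rkPSch{P}{x_0\leq x}(\lq)\backslash \kSch{G}{x_0}(\lq)/\lBSch{P}{x_0'}(\lq)$, and the smooth surjection $\RSch{G}{x_0}(\RLq)\to \kSch{G}{x_0}(\lq)$ allows us to lift each $a$ to a representative $g\in\RSch{G}{x_0}(\RLq)\subseteq G(\Lq)$. Setting $x'$ to be the image of $x$ in $I(\Ksch{G},\Lq)$ and defining $\RSch{L}{x'g}$ as the schematic closure of $\Ksch{L}$ inside $\RSch{G}{g^{-1}x}$, a second application of \cite{cunningham-salmasian:sheaves}*{Theorem~1} should identify each residue summand $\,^a(\res^\kkSch{L}{x_0}_{\kkSch{L}{x_0}\cap(\rkkPSch{P}{x_0\leq x})^a}\cres{L}{x_0}\fais{G})$ with $\,^g(\cres{L}{x'g}\fais{G})$, and each induction term with $\ind^\rkkSch{G}{x}_{\rkk{G}{\,^g\hskip-1pt P}{x'}}$ of the same.

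The main obstacle will be the geometric bookkeeping in this last translation: namely, choosing the lifts $g\in\mathcal{S}$ so that the reductive quotient of the special fibre of $\RSch{L}{x'g}$ really is $\kSch{L}{x_0}\cap (\rkPSch{P}{x_0\leq x})^a$, that the image of the special fibre of $\RSch{\,^g\hskip-1pt P}{x'}$ under $\nu_{\RSch{G}{x'}}$ really is the Levi component of $\rkSch{G}{x}\cap (\,^a\kSch{P}{x_0'})$, and that conjugation by $g$ on the nearby-cycle side is compatible with conjugation by $a$ on the residue side (via the morphism $\kkSch{m(g^{-1})}{x}$ of \cite{cunningham-salmasian:sheaves}*{\S 2.3}). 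These compatibilities ultimately rely on Hypothesis H.2 (to descend $\KKsch{P}$ to $\Lq$) and on the behaviour of schematic closures inside parahoric group schemes established in Propositions~\ref{proposition: geometry} and \ref{proposition: more geometry}; packaging them uniformly across all double cosets is where the technical work lies.
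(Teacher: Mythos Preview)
The statement you are attempting to prove is a \emph{conjecture}, not a theorem: the paper does not prove it, and explicitly says so in the section immediately following its statement. What the paper does prove is Theorem~\ref{theorem: 3bis}, which is the special case of Conjecture~\ref{conjecture: mackey} under the additional hypothesis that the star of $x$ contains a hyperspecial vertex. Your proposal is, essentially step for step, the paper's proof of Theorem~\ref{theorem: 3bis}; it is not a proof of the conjecture.

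The gap is in your very first sentence: ``choose a hyperspecial vertex $x_0$ in the star of $x$.'' In the setting of the conjecture, no such vertex need exist. The paper points out that for $\Ksch{G}=\Sp(4)$ the building contains non-hyperspecial vertices whose stars contain no hyperspecial vertex, and remarks that ``other techniques are required --- these are the topic of work in progress.'' Every subsequent step of your argument (the reduction via \cite{cunningham-salmasian:sheaves}*{Theorem~1}, the use of Proposition~\ref{proposition: cres ind}, and the invocation of Proposition~\ref{proposition: BIRS}) depends on having that hyperspecial $x_0$ available; without it, $\cres{G}{x_0}$ is not simply nearby cycles, Proposition~\ref{proposition: cres ind} does not apply, and one does not even know that the intermediate object is a direct sum of character sheaves. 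Note too that the conjecture allows $\Lq/\Kq$ to be tamely ramified rather than unramified, a further generalization your outline does not address. In short, your proposal correctly reconstructs the proof of Theorem~\ref{theorem: 3bis}, but it does not touch the genuinely open content of Conjecture~\ref{conjecture: mackey}.
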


In this paper we have proved Conjecture~\ref{conjecture: mackey} in the case that the star of $x$ contains a hyperspecial vertex.  Special linear groups and unitary groups, for example, have the property that for every $x\in I(\Ksch{G},\Kq)$ there is some hyperspecial vertex contained in the star of $x$, so Theorem~\ref{theorem: 3bis} can be used to determine $\cres{G}{x}\ind^\KKsch{G}_\KKsch{P} \fais{G}$ for every $x\in I(G,\Kq)$ in such cases. The smallest example of a group that does enjoy this property (that for every $x\in I(\Ksch{G},\Kq)$ there is some hyperspecial vertex contained in the star of $x$) is $\Ksch{G}=\Sp(4)$, precisely because the building for $\Sp(4,\Kq)$ contains non-hyperspecial vertices. In order to determine $\cres{G}{x}\ind^\KKsch{G}_\KKsch{P} \fais{G}$ in such cases, other techniques are required -- these are the topic of work in progress.



\appendix

\section{Toric $\Rq$-schemes}\label{appendix: toric schemes}

All schemes considered here will be separated schemes of finite type over $\Rq$.  
In particular, $\Gm$ denotes the group scheme $\Gm_{,\Rq} = \Spec{\Rq[t,t^{-1}]}$ and $\mathbb{A}^1$ denotes $\mathbb{A}^1_\Rq = \Spec{\Rq[t]}$.
Let $b: \Gm \to \mathbb{A}^1$ be the natural inclusion map.
Let $A$ be a finitely generated $\Rq$-algebra, and let $X = \Spec{A}$.

Specifying a group action $\mu: \Gm \times X \to X$ is equivalent to specifying an $\Rq$-module decomposition $A = \bigoplus_{n \in \ZZ} A_n$ that makes $A$ into a $\ZZ$-graded $\Rq$-algebra.
%
To see this, consider the coaction map $\mu^\sharp: A \to \Rq[t,t^{-1}] \otimes A$, and let $A_n = (\mu^\sharp)^{-1}(\Rq t^n \otimes A)$.  The claim follows from basic properties of $\mu^\sharp$.

Suppose now that $X$ is endowed with a $\Gm$-action.  Let $I \subset A$ be the ideal generated by $\Rq$-submodule $\bigoplus_{n \ne 0} A_n$, and set $X^\Gm = \Spec A/I$.  Let
\[
i: X^\Gm \to X
\]
denote the corresponding closed embedding.  $Z$ is the scheme of \emph{$\Gm$-fixed points}.  On the other hand, set $X \git \Gm = \Spec A_0$.  The corresponding map
\[
\pi: X \to X \git \Gm
\]
is called the \emph{invariant-theoretic quotient map}.
%
Let $X$ be a scheme with a $\Gm$-action $\mu: \Gm \times X \to X$.  This action is said to be \emph{contracting} if there is a map $\bar\mu: \mathbb{A}^1 \times X \to X$ such that the following diagram commutes:
\[
\xymatrix{
\Gm \times X \ar[rr]^{\mu} \ar[dr]_{b \times \id} && X \\
& \mathbb{A}^1 \times X \ar[ur]_{\bar\mu} }
\]
%
For an affine scheme $X = \Spec A$, an action $\mu: \Gm \times X \to X$ is contracting if and only if in the corresponding grading on $A$, we have $A_n = 0$ for $n < 0$.  When this holds, the map $\bar\mu: \mathbb{A}^1 \times X \to X$ is uniquely determined, and there is a canonical isomorphism $X^\Gm \cong X \git \Gm$.




\begin{bibdiv}
\begin{biblist}

\bib*{achar-cunningham-salmasian}{article}{
  author={Achar, Pramod},
  author={Cunningham, Clifton},
  author={Salmasian, Hadi},
  title={Mackey formula for compact restriction of cuspidal character sheaves},
  status={in preparation},
}

\bib{BBD}{article}{
author={Beilinson, Alexander },
author={Bernstein, Joseph},
author={Deligne, Pierre}, 
author={Gabber, Ofer},
title={Faisceaux Pervers},
journal={Ast\'erisque},
volume={100},
publisher={Soc. Math. France}, 
place={Paris},
date={1982},
}

\bib{bosch-lutkebohmert-raynaud:neron}{book}{
    author={Bosch, Siegfried},
    author={L{\"u}tkebohmert, Werner},
    author={Raynaud, Michel},
     title={N\'eron models},
    series={Ergebnisse der Mathematik und ihrer Grenzgebiete (3)}, 
    volume={21},
 publisher={Springer--Verlag},
     place={Berlin},
      date={1990},
      isbn={3-540-50587-3},
}

\bib{braden:hyperbolic}{article}{
author={Braden, Tom},
title={Hyperbolic localization of intersection cohomology},
journal={Transform. Groups},
volume={8},
date={2003},
number={3}, 
pages={209\ndash 216},
}

\bib{bruhat-tits:reductive-groups-1}{article}{ 
    author={Bruhat, Fran\c cois},
    author={Tits, Jacques},
     title={Groupes r\'eductifs sur un corps local},
   journal={Inst. Hautes \'Etudes Sci. Publ. Math.},
    number={41},
      date={1972},
     pages={5\ndash 251},
      issn={0073-8301},
}

\bib{bruhat-tits:reductive-groups-2}{article}{ 
    author={Bruhat, Fran\c cois},
    author={Tits, Jacques},
     title={Groupes r\'eductifs sur un corps local. II. Sch\'emas en
            groupes. Existence d'une donn\'ee radicielle valu\'ee},
   journal={Inst. Hautes \'Etudes Sci. Publ. Math.},
    number={60},
      date={1984},
     pages={197\ndash 376},
      issn={0073-8301},
}

\bib{cunningham-salmasian:sheaves}{article}{
  author={Cunningham, Clifton},
  author={Salmasian, Hadi},
  title={Character Sheaves of Algebraic Groups Defined over Non-Archimedean Local Fields},
status={preprint},
}

\bib*{proc:corvallis1}{collection}{  
     title={Automorphic forms, representations and $L$-functions. Part 1},
    editor={Borel, A.}, 
    editor={Casselman, W.}, 
    series={Proceedings of Symposia in Pure Mathematics, XXXIII},
 publisher={American Mathematical Society},
     place={Providence, R.I.},
      date={1979},
     pages={x+322},
      isbn={0-8218-1435-4},
}

\bib{digne-michel:lie-type}{book}{
    author={Digne, Fran{\c{c}}ois},
    author={Michel, Jean},
     title={Representations of finite groups of Lie type},
series={London Mathematical Society Student Texts},
    volume={21},
      date={1991},
publisher={Cambridge University Press},
      isbn={052140648X},
}

\bib{landvogt:compactification}{book}{
    author={Landvogt, Erasmus},
     title={A compactification of the Bruhat--Tits building},
    series={Lecture Notes in Mathematics},
    volume={1619},
 publisher={Springer--Verlag},
     place={Berlin},
      date={1996},
     pages={viii+152},
      isbn={3-540-60427-8},
}

\bib{lusztig:intersection-cohomology}{article}{
author={Lusztig, George},
title={Intersection cohomology complexes on a reductive group},
journal={Invent. Math.},
volume={75},
year={1984},  
number={2}, 
pages={205\ndash 272},
}

\bib{lusztig:character-sheaves-I}{article}{
   author={Lusztig, George},
   title={Character sheaves. I},
   journal={Adv. in Math.},
   volume={56},
   date={1985},
   number={3},
   pages={193--237},
   issn={0001-8708},
}

\bib{lusztig:character-sheaves-II}{article}{
   author={Lusztig, George},
   title={Character sheaves. II},
   journal={Adv. in Math.},
   volume={57},
   date={1985},
   number={3},
   pages={226--265},
   issn={0001-8708},
}

\bib{lusztig:character-sheaves-III}{article}{
   author={Lusztig, George},
   title={Character sheaves. III},
   journal={Adv. in Math.},
   volume={57},
   date={1985},
   number={3},
   pages={266--315},
   issn={0001-8708},
}

\bib{lusztig:character-sheaves-IV}{article}{
   author={Lusztig, George},
   title={Character sheaves. IV},
   journal={Adv. in Math.},
   volume={59},
   date={1986},
   number={1},
   pages={1--63},
   issn={0001-8708},
}

\bib{lusztig:character-sheaves-V}{article}{
   author={Lusztig, George},
   title={Character sheaves. V},
   journal={Adv. in Math.},
   volume={61},
   date={1986},
   number={2},
   pages={103--155},
   issn={0001-8708},
}

\bib{mars-springer:character-sheaves}{article}{
author={Mars, J. G. M.},
author={Springer, T. A.},
title={Character sheaves},
journal={Ast\'erisque},
number={173-174},
year={1989}, 
note={(In {\it Orbites unipotentes et repr\'esentations, III}).},  
pages={111\ndash 198},
}

\bib{ostrik:cuspidal}{article}{
 author={Ostrik, Victor},
 title={A remark on cuspidal local systems},
 journal={Advances in Mathematics},
 volume={192},
 year={2005},
 pages={218--224}
}

\bib{shoji:character-sheaves-I}{article}{
 author={Shoji, T.},
 title={Character sheaves and almost characters on reductive groups I}, 
 journal={Adv. in Math.},
 volume={111},
 year={1995}, 
 pages={244Ð313}
}
 
 \bib{shoji:character-sheaves-II}{article}{
 author={Shoji, T.},
 title={Character sheaves and almost characters on reductive groups I}, 
 journal={Adv. in Math.},
 volume={111},
 year={1995}, 
 pages={314Ð354}
}

\bib{springer:purity}{article}{
author={Springer, Tonny A.},
title={A purity result for fixed point varieties in flag manifolds},
journal={J. Fac. Sci. Univ. Tokyo Sect. IA Math.},
volume={31},
date={1984}, 
number={2}, 
pages={271\ndash 282},
}

\bib{springer-steinberg:conj}{article}{
    author={Springer, Tonny A.},
    author={Steinberg, Robert},
     title={Conjugacy classes},
      book={seminar:alg-gps}, 
     pages={167\ndash 266},
}

\bib{springer:lag2}{book}{
    author={Springer, Tonny A.},
     title={Linear Algebraic Groups, Second Edition},
    series={Modern Birkh\:{a}user Classics},
 publisher={Birkh\"auser},
     place={Boston},
      date={2009},
}

\bib{tits:corvallis}{article}{
    author={Tits, Jacques},
     title={Reductive groups over local fields},
      book={proc:corvallis1}, 
     pages={29\ndash 69},
}

\bib{yu:models}{article}{  
       author={Yu, Jiu-Kang},
	title={Smooth models associated to concave functions
		in Bruhat--Tits theory},
	status={preprint},
	year={2002},
        note={Version 1.3}
}

\end{biblist}
\end{bibdiv}

\end{document}